\newcommand{\SO}{\operatorname{SO}(3)}
\newcommand{\R}{\mathbb{R}}
\newcommand{\ba}{\mathbf{a}}
\newcommand{\bx}{\mathbf{x}}
\newcommand{\by}{\mathbf{y}}
\newcommand{\bz}{\mathbf{z}}
\newcommand{\bb}{\mathbf{b}}
\DeclareMathOperator*{\argmin}{argmin \,}
\newtheorem{theorem}{Theorem}
\newtheorem{assumption}{Assumption}
\newtheorem{lemma}[theorem]{Lemma}
\newtheorem{corollary}[theorem]{Corollary}
\newtheorem{remark}[theorem]{Remark}
\newtheorem{definition}[theorem]{Definition}
\title[Autocorrelation analysis for cryo-EM with sparsity constraints]{Autocorrelation analysis for cryo-EM with sparsity constraints: Improved sample complexity and projection-based algorithms}
\author[T. Bendory]{Tamir Bendory}
\email{bendory@tauex.tau.ac.il}
\author[Y. Khoo]{Yuehaw Khoo}
\email{ykhoo@uchicago.edu}
\author[J. Kileel]{Joe Kileel} 
\email{jkileel@math.utexas.edu}
\author[O. Mickelin]{Oscar Mickelin} \email{hm6655@princeton.edu}
\author[A. Singer]{Amit Singer}
\email{amits@math.princeton.edu}
\thanks{
TB is supported in part by the ISF grant no. 1924/21, the BSF grant no. 2020159,  and the NSF-BSF grant no. 2019752.
JK is supported in part by start-up grants from the College of Natural Sciences and Oden Institute for Computational Engineering and Sciences at UT Austin.
AS is supported in part by AFOSR FA9550-20-1-0266, the Simons Foundation Math+X Investigator Award, NSF BIGDATA IIS-1837992, NSF DMS-2009753, and NIH/NIGMS 1R01GM136780-01. 
JK thanks João M. Pereira for useful conversations. We would like to thank the editor and two anonymous reviewers whose suggestions improved the paper.}
\begin{document}

\maketitle

\begin{abstract}
	The number of noisy images required for molecular reconstruction in single-particle cryo-electron microscopy (cryo-EM) is governed by the autocorrelations of the observed, randomly-oriented, noisy projection images. 
	In this work, we 
	consider the effect of imposing  sparsity priors on the molecule. We use techniques from signal processing, optimization, and applied algebraic geometry to obtain new theoretical and computational contributions for this challenging non-linear inverse problem with sparsity constraints. 
	We
	prove that molecular structures modeled as
 sums of Gaussians are uniquely determined by the second-order autocorrelation of their projection images, implying that the sample complexity is proportional to the square of the variance of the noise. 
	This theory improves upon the non-sparse case, 
	where the third-order autocorrelation is required for uniformly-oriented particle images and the sample complexity scales with the cube of the noise variance.  
	Furthermore, we build a computational framework to reconstruct molecular structures which are sparse in the wavelet basis.
This method combines the sparse representation for the molecule with projection-based techniques used for phase retrieval in X-ray crystallography.
\end{abstract}

Sparsity is a ubiquitous prior in many linear inverse problems, including regression~\cite{tibshirani1996regression,goodfellow2016deep}, compressed sensing~\cite{donoho2006compressed,candes2006robust,eldar2012compressed}, and various image processing applications~\cite{elad2010sparse}, to name a few. 
While sparse priors are also used for non-linear inverse problems, 
their applicability and theoretical foundations are  limited to a few specific (usually linear and quadratic) models, e.g.,~\cite{cai2016optimal,chi2016guaranteed, elser2018benchmark,zhang2019structured}.
Motivated by single-particle cryo-EM---an imaging technology  for determining the 3-D structure of biological molecules---this paper 
uses modern techniques from signal processing, optimization, and applied algebraic geometry to provide new theoretical analysis and computational methods for a challenging non-linear inverse problem with sparsity constraints.

Cryo-EM has garnered increasing interest in the past decade due to a series of technological and algorithmic
breakthroughs, driving a striking improvement in the obtainable resolution, up to the level where
individual atoms can be distinguished.
This has in turn opened new scientific horizons  and led 
to many biological discoveries, e.g.,~\cite{kuhlbrandt2014resolution,bai2015cryo,callaway2020revolutionary}.

In a cryo-EM experiment, a solution containing molecules to be imaged is rapidly frozen into a thin ice layer, which is then placed in an electron microscope. 
Next, the  microscope acquires an image, called micrograph, which contains multiple
2-D tomographic projection images of the molecules. 
The 3-D orientations
 of individual projection images are unknown and random.
 To avoid damaging the samples, the electron dose must be kept low, resulting in a  low signal-to-noise ratio (SNR). 
 The cryo-EM computational problem is reconstructing the 3-D molecular structure 
 from these projection images~\cite{frank2006three}.

 The renewed interest in cryo-EM   led to a thorough investigation of its mathematical and statistical foundations~\cite{singer2018mathematics,bendory2020single}.  
In particular, a crucial challenge from a statistical perspective is understanding the \emph{sample complexity} of cryo-EM, i.e., the number of images that are required 
to obtain accurate reconstructions.
A remarkable result revealed an intimate connection between the sample complexity of cryo-EM (and related statistical models) and the \emph{method of moments} in the low SNR regime. 
If the distribution of the 3-D rotations is uniform, the method of moments reduces to autocorrelation analysis. 
In particular, 
it was shown that if $d$ is the lowest degree moment of the observations (i.e., the randomly oriented tomographic projections) that determines the molecular structure uniquely, 
a necessary condition for recovery is $n=\omega(\sigma^{2d}),$ namely, $n/\sigma^{2d}\to\infty$ as $n\to\infty$) where $\sigma^2$ is the variance of the noise~\cite{perry2019sample,abbe2018estimation}.
Specifically,  if the distribution of rotations is uniform, then   the third-order autocorrelation is the lowest order autocorrelation that  determines a  generic 3-D structure, implying a sample complexity of  $n=\omega(\sigma^6)$~\cite{bandeira2017estimation}. 
This agrees with long-standing empirical evidence~\cite{sigworth1998maximum}.

Autocorrelation analysis was  first introduced to cryo-EM by Zvi Kam more than 40 years ago~\cite{kam1980reconstruction}.
 Kam showed that the second-order autocorrelation of the projection images (which can be estimated with $n=\omega(\sigma^4)$ observations) determines the 3-D structure up to a set of orthogonal matrices, under the assumption that the rotations are drawn from a uniform distribution.
 A few methods have been proposed  to resolve the missing orthogonal matrices based on typically  unavailable  side information, such as homologous models with known structure, or a few  clean projections images, in order to 
construct ab initio models~\cite{bhamre2015orthogonal,levin20183d,huang2022orthogonal}.
These ab initio models can then be refined using expectation-maximization: the prevalent algorithmic framework for the cryo-EM reconstruction problem that aims to  maximize the non-convex  posterior distribution~\cite{sigworth1998maximum,scheres2012relion,punjani2017cryosparc}.
In addition, it was recently shown that if the distribution of  rotations is non-uniform,
then there is at most a finite list of structures that agree with the second moment of the observations~\cite{sharon2020method}.
Techniques that are inspired by Kam's method were also proposed
as a solution to the molecular reconstruction problem in X-ray free-electron lasers (XFEL), which, akin to the reconstruction problem in cryo-EM, involves recovering a 3-D structure from its randomly oriented diffraction images~\cite{saldin2011reconstructing,kurta2013solution,donatelli2015iterative,kurta2017correlations}. 
In contrast to cryo-EM, in XFEL the rotations are more likely to be uniformly distributed for particles with nearly uniform dimensions and the reconstruction problem is more involved since the measurements consist of the magnitudes of Fourier coefficients without their phases.

The first contribution of this paper is proving  that if the sought-for molecular structure can be described as a sparse combination of Gaussian functions,
then
the structure can be determined uniquely from the 
 second-order autocorrelation of the observations, even if the rotations are distributed uniformly.  This  eliminates the orthogonal matrix ambiguity in Kam's original paper~\cite{kam1980reconstruction}.
This result is the first theoretical guarantee for unique recovery from the second moment. 
 It combines the sparsity assumption with proof techniques from real algebraic  geometry,
substantially reducing the sample complexity of the cryo-EM reconstruction problem from $n=\omega(\sigma^6)$ to $n=\omega(\sigma^4)$.
The argument is constructive in  the sense that it provides a polynomial-time recovery algorithm. However, the said algorithm is tailored to the specific model of point mass functions.  
It is not well-suited to data discretized into pixels or voxels because it hinges on the ability to cluster points in the support of the second moment into certain distinct components, see Remark~\ref{rem:not-practical}. (Accurate clustering becomes difficult when data is discretized and the components are close to each other.)

The second contribution of this paper is 
 a practical algorithm, fusing the second-order autocorrelation of the projections with a sparse representation of the molecular structure, and requiring only $n=\omega(\sigma^4)$ observations. 
The algorithm builds on the realization that a typical 3-D structure 
can be represented by only a few coefficients in a suitable basis.
Similar sparsity assumptions have been leveraged in a wide variety of scientific and engineering applications, including compressed sensing~\cite{donoho2006compressed,candes2006robust,eldar2012compressed}, image processing~\cite{elad2010sparse}, and phase retrieval \cite{cai2016optimal,chi2016guaranteed, jaganathan2017sparse, elser2018benchmark,zhang2019structured}, to name a few.
In cryo-EM, there have been several attempts to represent the 3-D structure as either a sparse mixture of Gaussians~\cite{zehni20203d,joubert2015bayesian, jonic2015coarse,kawabata2018gaussian,zhong2021exploring,chen2021deep,rosenbaum2021inferring} or using alternative bases~\cite{vonesch2011fast}. In particular, in settings with sufficiently high SNR where it is possible to identify the Gaussian mixtures within individual projection images, the inverse problem simplifies and can be solved \cite{panaretos2009random,panaretos2011sparse}. 
Yet, the sparsity property  has still not been fully harnessed to represent and recover 3-D molecular structures, and it is not part of the standard computational pipeline of cryo-EM. 
 
The technique is based on a new connection between Kam's theory for cryo-EM and the crystallographic phase retrieval problem---recovering a sparse signal from its Fourier magnitudes. 
In particular, we adapt projection-based algorithms that were designed for the crystallographic phase retrieval problem to the cryo-EM setting.
These algorithms were extensively validated on experimental X-ray crystallography datasets by prior researchers, see for example~\cite{fienup1982phase,elser2003phase,luke2004relaxed,elser2007searching,elser2018benchmark}.
Here, we demonstrate on simulated data that they are also useful in constructing ab initio models in cryo-EM.  They can be used to mitigate computational and model bias issues  associated with the non-convexity of the cryo-EM reconstruction problem~\cite{singer2011three,punjani2017cryosparc,greenberg2017common}.
 This new computational approach opens the door to merging more aspects of the phase retrieval and cryo-EM fields in future work. 

The rest of the paper is organized as follows.
In Section~\ref{sec:preliminary_definitions}, we provide background on the reconstruction problem in cryo-EM, the method of moments, Kam's theory, and the crystallographic phase retrieval problem. In Section~\ref{sec:theoretical}, we prove that a structure composed of an ensemble of ideal point masses subject to uniform rotations can be recovered from the second order autocorrelation, implying a sample complexity of $n=\omega(\sigma^4)$. Section~\ref{sec:algorithm} outlines the practical computational framework and presents numerical results. Section~\ref{sec:discussion} concludes the paper and discusses potential theoretical and computational extensions.

\section{Preliminaries}\label{sec:preliminary_definitions}

\subsection{The cryo-EM problem}
Cryo-EM reconstruction seeks to determine a 3-D molecular structure $\Phi$ from its 2-D noisy tomographic projections, taken at random viewing angles. 
In this work, we focus on the case of uniformly random rotations.  
Uniformity is often taken as a baseline model, was  the setting in Kam's paper \cite{kam1980reconstruction}, and is  \emph{harder} than the case of a non-uniform distribution of rotations in the sense that it requires asymptotically more images  (without sparsity priors) \cite{bandeira2017estimation,sharon2020method}. 
Formally, let $\mu$ be the Haar probability measure on the compact group $\SO$ of 3-D rotations, representing the uniform distribution. 
Assuming that we observe i.i.d.\ 2-D images of $\Phi$ after it has been randomly rotated according to $\mu$ and then tomographically projected to the plane, each projection image is modeled as:
\begin{equation} \label{eq:image-1}
    I_R(x,y) = \int_{z=-\infty}^{\infty} (R \cdot \Phi)(x,y,z) dz \, + \, \varepsilon(x,y), \quad  R \sim \mu,
\end{equation}
where $\varepsilon(x,y)$ is white  Gaussian noise with known variance $\sigma^2$, and $R \cdot \Phi$ denotes the action of the rotation $R$ on $\Phi$. 
Here, typically, the variance of the noise $\sigma^2$ is much greater than the magnitude of the clean projection. 

The cryo-EM problem is to estimate the molecular structure $\Phi$ from $n$ realizations of~\eqref{eq:image-1}, i.e., from the 2-D observations  $I_{R_1}, I_{R_2}, \ldots , I_{R_n}$.
In Section~\ref{sec:discussion}, we discuss how the proposed framework can be extended to account for additional aspects in the generative model for cryo-EM images.

\subsection{The method of moments}
Our theoretical and computational contributions are based on the method of moments---a basic statistical inference technique tracing back to the seminal paper of Karl Pearson in the end of the 19th century.   
Specifically, we use the second moment of the observations \eqref{eq:image-1}, and relate it to the sought-for 3-D structure. 

The (debiased) second observable moment is given by 
\begin{align}
    &\overline{M_2}((x_1, y_1), (x_2, y_2)) = \frac{1}{n} \sum_{i=1}^n I_{R_i}(x_1, y_1)  I_{R_i}(x_2, y_2)-B(\sigma^2), \label{eq:moments-empirical-second}
    \end{align}
    where $B(\sigma^2)$ is a bias term that depends only on the noise variance.
    For large enough $n$, we have
\begin{align}\label{eq:moments-population-second}
    \overline{M_2}((x_1, y_1), (x_2, y_2))  &\approx M_2((x_1,y_1),(x_2,y_2)),
\end{align}
where 
\begin{equation} \label{eq:pop-second-moment}
   M_2((x_1,y_1),(x_2,y_2)) := \!\!\! \int_{\SO} \!\!\!\!\! \!\!\!\!\! \!\! I_R(x_1,y_1) I_R(x_2,y_2)  d\mu(R)
   - B(\sigma^2),
\end{equation}
denotes the (debiased) population second moment, which is a function of $\Phi$ through Eq.~\eqref{eq:image-1}.
More precisely, for $n=\omega(\sigma^4)$ it holds $\overline{M_2} = M_2 + o(1)$ with high probability.

The idea of the method of moments is to find a structure $\Phi$ which matches 
the observable  moments.
It is an alternative to other standard statistical estimation methods, e.g.,  maximum likelihood estimation (MLE).  
That said, a recent paper suggests that in the low SNR regime, the method of moments approximates the  MLE~\cite{katsevich2020likelihood}. 

\subsection{Kam's method}
\label{sec:kam}
We detail a specific approach to autocorrelation analysis in cryo-EM, 
 introduced by Kam~\cite{kam1980reconstruction}. 
To this end, we need to introduce a convenient basis for representing a 3-D structure $\Phi$, the spherical Bessel basis~\cite{andrews1998special}. An expansion of maximum degree $L$ is defined by first expanding the Fourier transform $\mathcal{F}(\Phi)$ of $\Phi$ in spherical harmonics as 
\begin{equation}\label{eq:sph_fourier}
\mathcal{F}(\Phi) (k, \theta, \varphi)  \approx\sum_{\ell = 0}^L \sum_{m = -\ell}^\ell A_{\ell m}(k) Y^m_\ell(\theta, \varphi),
\end{equation}
where $k$ denotes the radial frequency and $Y^m_\ell(\theta, \varphi)$ are the spherical harmonics. 
In addition, the spherical harmonics coefficients $A_{\ell m}(k)$ are expanded by spherical Bessel functions, up to degree $S_\ell$, as
\begin{equation}
    A_{\ell m}(k) \approx  \sum_{s=1}^{S_\ell} a_{\ell m s}j_{\ell s}(k).
\end{equation}
The functions $j_{\ell s}(k)$ are the normalized spherical Bessel functions. 
By allowing $L$ and $S_\ell$ to grow unboundedly, any bandlimited function with bandlimit 1 can be represented in this basis.
However, when expanding 3-D molecular structures from discretized projection images, the Nyquist criterion applied to the projection images limits the amount of extractable information. This determines bounds on the maximally allowable truncation parameters $L$ and $S_\ell$; see the appendix for a detailed description.

We  aim to recover the coefficients $a_{\ell m s}$ (up to rotation and reflection in $\mathbb{R}^3$). As will be shown next, it is convenient to gather the coefficients into matrices $A_\ell$,  of size $S_\ell \times (2\ell+1)$, via $A_\ell(s,m) := a_{\ell m s}$, for each $\ell = 0, \ldots , L$.

Provided that the distribution of  viewing angles is uniform, Kam~\cite{kam1980reconstruction} showed that the second moment  of the Fourier transform of the projection images yields estimates for the following $S_\ell \times S_\ell$ matrices:
\begin{equation}\label{eq:defC}
C_{\ell}(s_1,s_2) := \sum_{m=-\ell}^{\ell} A_{\ell}(s_1, m)\overline{A_{\ell} (s_2,m)} = A_\ell A_\ell^*.
\end{equation}
Applying the Cholesky decomposition to each $C_\ell$ in \eqref{eq:defC} and imposing $\Phi$ to be real-valued, knowledge of \eqref{eq:defC} identifies each matrix of coefficient $A_{\ell}$ up to an unknown real, orthogonal transformation, provided $S_\ell \geq 2\ell +1$. That is,  we can compute $A_\ell O_\ell$ for some unknown orthogonal matrix $O_\ell$ in the group $O(2\ell+1)$.
Therefore, the second moment determines $\Phi$ up to a set of orthogonal matrices. To recover these matrices, and thus the 3-D structure, additional information is required.  In this paper, we suggest using a sparsity assumption.

\subsection{Crystallographic phase retrieval}
One of the contributions of this paper is to relate the cryo-EM reconstruction problem to crystallographic phase retrieval. Phase retrieval is the main computational challenge in X-ray crystallography, which is 
still a leading method for elucidating the atomic
structure of molecules.
The prevalence of crystallography is witnessed by the remarkable fact that  25 Nobel Prizes have been awarded for work directly or indirectly involving crystallography~\cite{galli2014x}.
Although there exist additional important  phase retrieval applications (see for example~\cite{shechtman2015phase,donatelli2015iterative,bendory2017fourier,barnett2022geometry}), 
X-ray crystallography is by far the most widely investigated  application.

The crystallographic phase retrieval problem entails recovering a sparse signal $x$ from its periodic autocorrelation (or, equivalently, from its Fourier transform magnitudes, namely, its power spectrum).
While simply stated, and despite its importance, the theoretical foundations of this problem continue to evolve. In particular, it was recently conjectured that a generic sparse signal can be recovered from its periodic  autocorrelation if the number of non-zero entries is smaller than half the signal's length~\cite{bendory2020toward}.
 This conjecture was verified for a few cases.
The relation of the crystallographic phase retrieval problem with  the beltway problem from combinatorial optimization is explored in~\cite{ghosh2021multi}. Our theoretical reconstruction guarantees in the following section can be viewed as analogous results in the setting of cryo-EM.

The standard algorithms for crystallographic phase retrieval build on two projection operators: one onto the measured data (the power spectrum) and the second onto the space of sparse signals. 
While simple algorithms that alternate between these two projections tend to quickly stagnate, a more sophisticated family of algorithms, based on reflections, shows excellent performance, though their running time is exponential in the sparsity level~\cite{elser2018benchmark,bendory2022sparse}.  
These algorithms are tightly related to splitting methods, such as  Douglas-Rachford and the alternating direction method of multipliers  (ADMM), and have been applied to a wide variety of problems~\cite{elser2007searching}.
A main contribution of this paper is a  modification of these algorithms to autocorrelation analysis for cryo-EM. In particular, we focus  on one such algorithm, called relaxed-reflect-reflect (RRR), but alternative algorithms, such as  Fienup’s hybrid input-output algorithm~\cite{fienup1982phase}, the difference map algorithm~\cite{elser2003phase}, and the relaxed averaged alternating reflections algorithm~\cite{luke2004relaxed}, can be adapted to cryo-EM by the same strategy. Importantly, if the model is accurate (e.g., no noise and the correct sparsity level is known) RRR iterations halt only when they find a solution that satisfies both constraints (defined by the projection operators). Thus, RRR does not suffer from local minima as gradient-based algorithms do.

\section{Superior sample complexity: The second moment suffices for sums of  point masses}\label{sec:theoretical}

This section presents our main theoretical result: the second moment suffices to recover an idealized sparse volume, i.e., a volume given as a weighted sum of point masses. 
We deduce that the second moment also suffices for a pixelated and blurred variant of the model.  
Our theorems imply an associated sample complexity of $n=\omega(\sigma^4)$. This stands in contrast to previous results which do not assume sparsity.  There, the third moment is required for recovery, and the associated sample complexity is $n=\omega(\sigma^6)$~\cite{bandeira2017estimation, fan2021maximum}.

\subsection{Models and main theoretical results} 
We use an atomistic representation of a molecule.  
In our first  idealized model, an atom is specified by a weighted Dirac delta function, and a molecule is a sum of such point masses.
In more detail, let $\ba_1, \ldots, \ba_p \in \R^3$ be the 3-D points representing atom locations, and  $w_1, \ldots, w_p$ be positive weights corresponding to the scattering potentials of the individual atoms.
Then,
\begin{equation}\label{eq:model_sparse_structure}
\Phi := \sum_{i=1}^p w_i \delta_{\ba_i},
\end{equation}
is the molecule composed of the atoms $(\ba_i, w_i)$.
Relabeling if necessary,  we assume that the $\ell^2$-norms $\|\ba_i\|$ are in descending order.

\begin{figure} 
\centering
\includegraphics[width = \linewidth]{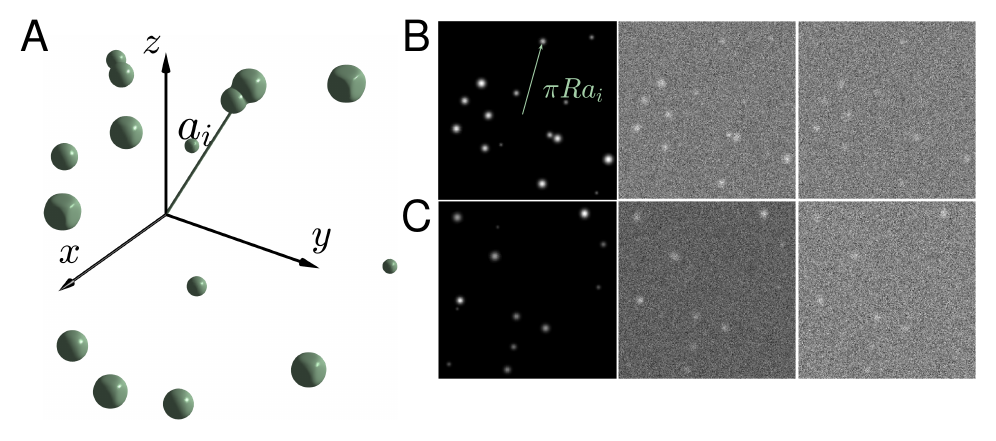}
\caption{(A) Example of a structure of the form in~\eqref{eq:model_sparse_structure}. For this illustration, the size of each dot is proportional to the weight $w_i$. (B--C) Two examples of tomographic projections of rotated versions of (A), with no noise (leftmost), signal-to-noise ratio 0.04 (middle) and signal-to-noise ratio 0.01 (rightmost).} \label{fig::definitions} 
\end{figure}
We model each projection image $I_R$ as a mixture of $p$ Dirac delta functions on $\R^2$ plus noise:
\begin{equation} \label{eq:image-2}
    I_R(x,y) := \sum_{i=1}^p w_i \delta_{\pi R \ba_i}(x,y)  +  \varepsilon(x,y).
\end{equation}
Here $\pi : \R^3 \rightarrow \R^2$ denotes coordinate projection onto the first two  coordinates, and $\varepsilon$ is white  Gaussian noise with (known) variance $\sigma^2$.
Given $n$ 2-D images as in \eqref{eq:image-2}, the reconstruction problem is to recover the atoms $(\ba_i, w_i)$ in \eqref{eq:model_sparse_structure} up to a global rotation and reflection. 
(The reflection ambiguity exists because a molecule and its reflection in the microscope's image plane are indistinguishable given cryo-EM data, see e.g. \cite{frank2006three}.)  
See Fig.~\ref{fig::definitions} for an illustration of the setup.

Under this model the (debiased) second population moment, obtained by substituting \eqref{eq:image-2} into 
\eqref{eq:pop-second-moment}, reads: 
\begin{align}
   \begin{split}
  &  M_2((x_1,y_1),(x_2,y_2)) = \sum_{i=1}^p \sum_{j=1}^p w_i w_j \, \times \\
  &\qquad\qquad\qquad\quad  \int_{\SO} \!\!\!\! \delta_{\pi R \ba_i}(x_1, y_1) \delta_{\pi R \ba_j}(x_2, y_2) d\mu(R), \label{eq:moment-population-second}
   \end{split}
\end{align}
where $\mu$ is the uniform distribution.
Note that $M_2$  is a measure on $\mathbb{R}^2 \! \times \mathbb{R}^2$.  

We introduce two assumptions on the atom locations:
\begin{enumerate}
    \item[\textbf{A1.}] The vectors $\ba_i$ are pairwise linearly independent;
    \item[\textbf{A2.}] The norms $\| \ba_i \|$ are distinct.
\end{enumerate}
We remark that conditions $\textbf{A1}$ and $\textbf{A2}$ are quite restrictive, e.g.,
ruling out molecules with nontrivial point-group symmetries.

Our first main theoretical result is stated as follows.

\begin{theorem} \label{thm:unique-determine}
Consider the model given by \eqref{eq:model_sparse_structure}-\textup{(\ref{eq:image-2})}.  
Assume that conditions \textup{\textbf{A1}-\textbf{A2}} hold.
Then, the support
of the second moment $M_2$  uniquely determines the set  of triples
    $\{ (\| \ba_i \|^2, \| \ba_j \|^2, \langle \ba_i, \ba_j \rangle ) : i, j= 1, \ldots, p \}$.
Therefore, $M_2$ \textup{(Eq.~\eqref{eq:moment-population-second})} uniquely determines the set of atom locations $\{ \ba_i : i = 1, \ldots, p \}$ up to a rotation and reflection in $\R^3$.
\end{theorem}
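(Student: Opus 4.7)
The plan is to decompose the support of $M_2$ into irreducible algebraic pieces indexed by ordered atom pairs, read the $\SO$-invariants $(\|\ba_i\|^2, \|\ba_j\|^2, \langle \ba_i, \ba_j\rangle)$ off each piece, and then recover $\{\ba_i\}$ from the resulting Gram matrix. Since the weights $w_i$ are positive, \eqref{eq:moment-population-second} has no cancellation, so $\mathrm{supp}(M_2) = \bigcup_{i,j=1}^{p} V_{ij}$, where $V_{ij} := \{(\pi R \ba_i, \pi R \ba_j) : R \in \SO\} \subset \R^4$. Writing $\alpha_i = \|\ba_i\|^2$, $\gamma_{ij} = \langle \ba_i, \ba_j\rangle$ and the invariants $r_1 = x_1^2 + y_1^2$, $r_2 = x_2^2 + y_2^2$, $s = x_1 x_2 + y_1 y_2$, the identities $(R\ba_i)_3^2 = \alpha_i - r_1$, $(R\ba_j)_3^2 = \alpha_j - r_2$ and $(R\ba_i)_3 (R\ba_j)_3 = \gamma_{ij} - s$ force
\begin{equation*}
p_{ij}(x_1, y_1, x_2, y_2) := (\alpha_i - r_1)(\alpha_j - r_2) - (\gamma_{ij} - s)^2 = 0
\end{equation*}
to hold on $V_{ij}$. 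A sign-choice argument gives the converse: every real point of $H_{ij} := \{p_{ij} = 0\}$ satisfying the visible inequalities $r_1 \leq \alpha_i$, $r_2 \leq \alpha_j$, $s^2 \leq r_1 r_2$ lifts back to some $R \in \SO$ by reconstructing the third coordinates and using that two pairs of vectors in $\R^3$ with equal Gram matrices differ by an element of $\SO$. Hence $V_{ij}$ is a $3$-dimensional semi-algebraic set that is Zariski-dense in $H_{ij} \subset \R^4$.

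For $i \neq j$, I would verify that $p_{ij}$ is irreducible in $\R[x_1, y_1, x_2, y_2]$ under \textbf{A1}: viewed as a quadratic in $x_1$ over $\R[y_1, x_2, y_2]$, its discriminant equals (up to sign) $4 D_1 D_2$, where $D_1(y_1, y_2) = (\alpha_i \alpha_j - \gamma_{ij}^2) - (\alpha_j y_1^2 - 2\gamma_{ij} y_1 y_2 + \alpha_i y_2^2)$ is a positive constant minus a positive-definite form (its zero locus is an ellipse, hence irreducible), and $D_2(x_2, y_2) = \alpha_j - x_2^2 - y_2^2$ cuts out a circle (hence is also irreducible). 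These are two distinct irreducible polynomials, so $D_1 D_2$ is not a square in $\R(y_1, x_2, y_2)$, and Gauss's lemma yields the irreducibility of $p_{ij}$. The diagonal contributions $V_{ii} \subset \{(x,x) : x \in \R^2\}$ are at most $2$-dimensional, so the top-dimensional irreducible components of the Zariski closure of $\mathrm{supp}(M_2)$ are exactly the hypersurfaces $\{H_{ij}\}_{i \neq j}$. Under \textbf{A2}, distinct ordered pairs yield distinct triples $(\alpha_i, \alpha_j, \gamma_{ij})$ and hence distinct polynomials $p_{ij}$, so the $H_{ij}$ are pairwise distinct and unambiguously separated by irreducible decomposition.

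Finally, from each $H_{ij}$ one reads off $p_{ij}$ (normalized so that its degree-$4$ part equals the common $r_1 r_2 - s^2 = (x_1 y_2 - y_1 x_2)^2$) and then the triple $(\alpha_i, \alpha_j, \gamma_{ij})$ from the degree-$2$ coefficients. The set of squared norms $\{\alpha_i\}_{i=1}^p$ emerges as the distinct values appearing as first coordinates across these triples, which by \textbf{A2} allows a unique labeling of the atoms and yields the full Gram matrix $G$ with $G_{ij} = \gamma_{ij}$; then $\{\ba_i\}$ is recovered up to right multiplication by an $O(3)$ matrix---i.e., rotation and reflection---by eigendecomposing $G = A A^\top$. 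The main obstacle is the four-variable irreducibility step: a polynomial irreducible in the three $O(2)$-invariants $(r_1, r_2, s)$ need not stay irreducible after pullback to $(x_1, y_1, x_2, y_2)$ (for instance, $r_1 r_2 - s^2$ becomes the square $(x_1 y_2 - y_1 x_2)^2$), so one must argue directly on $\R[x_1, y_1, x_2, y_2]$ using the strict Cauchy-Schwarz gap $\alpha_i \alpha_j > \gamma_{ij}^2$ from \textbf{A1}.
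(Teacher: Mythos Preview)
Your proposal is correct and follows essentially the same architecture as the paper's proof: identify $\mathrm{supp}(M_2)=\bigcup S_{ij}$, show each off-diagonal piece has Zariski closure the irreducible quartic hypersurface $\{q_{ij}=0\}$, take the irredundant decomposition to isolate the $q_{ij}$, read off $(\|\ba_i\|^2,\|\ba_j\|^2,\langle\ba_i,\ba_j\rangle)$ from the normalized coefficients, and recover $\{\ba_i\}$ from the Gram matrix via \textbf{A2} and an eigendecomposition.

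The one substantive difference is your irreducibility argument for $p_{ij}$. The paper (Lemma~\ref{lem:Zar-closure}) argues by contradiction: it assumes a nontrivial factorization $q_{ij}=fg$, compares top-degree parts to force $f_{\mathrm{top}}=g_{\mathrm{top}}=x_1y_2-y_1x_2$, and then derives incompatible linear conditions on the lower-order coefficients. Your approach---viewing $p_{ij}$ as a quadratic in $x_1$ over $\R(y_1,x_2,y_2)$ and checking that the discriminant $4D_1D_2$ is squarefree---is a cleaner and more direct route, and your factorization of the discriminant into the ellipse $D_1\in\R[y_1,y_2]$ and the circle $D_2\in\R[x_2,y_2]$ is correct (both are irreducible over $\R$ by \textbf{A1}, and they are coprime since any common factor would lie in $\R[y_2]$). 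This buys you a self-contained argument that avoids the somewhat ad hoc coefficient-matching in the paper. One small cosmetic point: the inequality $s^2\le r_1r_2$ you list among the ``visible inequalities'' is automatic by Cauchy--Schwarz in $\R^2$ and plays no role; the paper's description of $S_{ij}$ uses only $r_1\le\alpha_i$ and $r_2\le\alpha_j$.
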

Theorem~\ref{thm:unique-determine} is proven in Subsection~\ref{sec:information}, after auxiliary results are given in Subsection~\ref{subsec:support}.  

Building on the uniqueness in Theorem~\ref{thm:unique-determine}, we obtain the following constructive result as well.

\begin{theorem} \label{thm:theoretical}
Consider the model given by \eqref{eq:model_sparse_structure}-\textup{(\ref{eq:image-2})}.  Assume that $p \geq 3$ and conditions  \textup{\textbf{A1}}-\textup{\textbf{A2}} hold. 
	Then, Algorithm~\ref{alg:theoretical} (described in Subsection~\ref{subsec:proof-recovery}) recovers the set of atoms 
	$\{ (\ba_i, w_i) : i = 1, \ldots, p \}$
	up to a rotation and reflection of the atom locations in $\R^3$ from the second moment $M_2$ \textup{(Eq.~\eqref{eq:moment-population-second})} in $\mathcal{O}(p^2)$ flops.  
\end{theorem}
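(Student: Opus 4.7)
The plan is to turn the uniqueness argument of Theorem~\ref{thm:unique-determine} into an explicit constructive procedure and then to bound its running time. By that theorem, the support of $M_2$ already determines the multiset
\[
T \;:=\; \bigl\{ \bigl(\| \ba_i \|^2, \| \ba_j \|^2, \langle \ba_i, \ba_j \rangle\bigr) : 1 \leq i,j \leq p \bigr\}.
\]
My first step is therefore to read $T$ off from $M_2$. Because the support of $M_2$ decomposes into $\mathcal{O}(p^2)$ geometric pieces (one for each ordered pair $(i,j)$, as described in Subsection~\ref{subsec:support}), this extraction costs $\mathcal{O}(p^2)$ flops, where each piece contributes $\mathcal{O}(1)$ work. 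The diagonal triples, where the two norm entries coincide, yield the $p$ values $\|\ba_i\|^2$; assumption \textbf{A2} guarantees that these are distinct, so we can sort them and use them as unique labels $i = 1, \ldots, p$. For each ordered pair $(i,j)$ with $i \neq j$, there is a unique triple in $T$ whose first two coordinates equal $(\|\ba_i\|^2, \|\ba_j\|^2)$, and its third coordinate gives $\langle \ba_i, \ba_j\rangle$. This populates the full Gram matrix $G \in \mathbb{R}^{p \times p}$ in $\mathcal{O}(p^2)$ time.

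Next I reconstruct the atom positions by exploiting the fact that $\mathrm{rank}(G) \leq 3$. Rather than computing a full eigendecomposition (which would cost $\mathcal{O}(p^3)$), I scan through indices until I locate $r \leq 3$ atoms (WLOG $\ba_1, \ldots, \ba_r$) whose Gram submatrix is nonsingular; by \textbf{A1} and $p \geq 3$ such a subset exists, and finding it costs $\mathcal{O}(p)$. I then fix a canonical frame by placing $\ba_1$ along the positive $x$-axis, $\ba_2$ in the upper half of the $xy$-plane, and (if $r = 3$) $\ba_3$ with nonnegative $z$-coordinate; their coordinates are read off directly from the entries of the $r \times r$ reference Gram block in $\mathcal{O}(1)$ flops. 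Each remaining atom $\ba_i$ satisfies the overdetermined but consistent $r \times 3$ linear system $B^{\top} \ba_i = (\langle \ba_1, \ba_i\rangle, \ldots, \langle \ba_r, \ba_i\rangle)^{\top}$, where $B = [\ba_1 \mid \cdots \mid \ba_r]$ is fixed. Since $B$ can be inverted once in $\mathcal{O}(1)$, each atom is recovered in $\mathcal{O}(1)$ flops, giving $\mathcal{O}(p)$ in total for this reconstruction step. Any two reference frames compatible with the given Gram matrix differ by an element of $\mathrm{O}(3)$, so the output is determined up to a rotation and reflection in $\R^3$, matching the claim.

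It remains to recover the weights $w_i$. For each pair $(i,j)$ the associated component of the support of $M_2$ carries total mass proportional to $w_i w_j$ (the proportionality constant being the total mass of the pushforward of Haar measure onto the corresponding pair of rotated projections, which is intrinsic and known). Reading the diagonal components yields $w_i^2$, from which $w_i > 0$ is obtained by taking a square root; one can cross-check consistency with the off-diagonal masses $w_i w_j$ if desired. This stage costs $\mathcal{O}(p^2)$ in total (or $\mathcal{O}(p)$ if we only use the diagonal components). Adding up all of the steps gives the claimed $\mathcal{O}(p^2)$ bound.

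The main obstacle I anticipate is not the algebra but the bookkeeping in the first step, namely, converting the geometric information in the support of $M_2$ into the clean combinatorial list $T$ while respecting the $\mathcal{O}(p^2)$ budget. This relies on the structural description of the support derived in Subsection~\ref{subsec:support}, together with the strict orderings enabled by \textbf{A2}, which together let us identify and label each $(i,j)$-component in $\mathcal{O}(1)$ time. Once $T$ and $G$ are in hand, the rank-3 structure and the existence of an invertible $r \times r$ reference block reduce the remainder of the recovery to constant-size linear algebra per atom.
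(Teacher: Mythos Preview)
Your argument has a genuine gap in the weight-recovery step.  You assert that ``reading the diagonal components yields $w_i^2$'', but the diagonal pieces $S_{ii}$ are \emph{not} disjoint: each $S_{ii}=\{(\bx,\bx):\|\bx\|\le\|\ba_i\|\}$ is a disk inside the common $2$-plane $\{\bx_1=\bx_2\}$, and these disks are nested according to the norms $\|\ba_i\|$.  Consequently
\[
M_2(S_{ii}) \;=\; \sum_{i'} w_{i'}^2\,\mu\bigl(\{R:\|\pi R\ba_{i'}\|\le\|\ba_i\|\}\bigr),
\]
which is a weighted sum over \emph{all} atoms, not just $w_i^2$.  (In the Zariski closure there is only a single diagonal component, cf.\ Corollary~\ref{cor:irredundant}.)  This is exactly why the paper works exclusively with off-diagonal pieces: Lemma~\ref{lem:M2Sij} gives $M_2(S_{ij})=w_iw_j$ only for $i\neq j$, yielding the off-diagonal entries of $ww^\top$, and the diagonal is then filled in by the rank-$1$ completion formula~\eqref{eq:w-complete}.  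The hypothesis $p\ge 3$ is needed precisely here, which is another indication that a direct diagonal reading cannot work.

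The same issue affects your first step: the ``diagonal triples'' $(\|\ba_i\|^2,\|\ba_i\|^2,\|\ba_i\|^2)$ are not separately extractable from the support, since all $\overline{S_{ii}}$ coincide.  This is harmless for the positions, because the off-diagonal triples already carry all the norms; but you should say so explicitly rather than invoke diagonal components.

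Your alternative to the truncated eigendecomposition---fixing a reference frame from $r\le 3$ independent atoms and solving $B^\top\ba_i=(\langle\ba_1,\ba_i\rangle,\ldots,\langle\ba_r,\ba_i\rangle)^\top$ for the rest---is a perfectly valid (and slightly faster) substitute for the paper's randomized rank-$3$ decomposition.  Note however that \textbf{A1} only guarantees pairwise linear independence, so $r$ may equal $2$; in that case the system is underdetermined in $\R^3$, and you should say that you restrict to the span of $\ba_1,\ba_2$ before solving.
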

Theorem~\ref{thm:theoretical} is proven in Subsection~\ref{subsec:proof-recovery}.
In fact the model of \eqref{eq:model_sparse_structure}-\textup{(\ref{eq:image-2})} can be extended to Gaussians of non-zero width of the form
\begin{equation}\label{eq:model_sparse_gaussian_structure}
\Phi(\bx) := \sum_{i=1}^p w_i k(\bx-\ba_i),
\end{equation}
where $k(\bx) = e^{-\frac{\|\bx\|^2}{2\kappa^2}}$ is an isotropic Gaussian with standard deviation $\kappa > 0$. Our results guarantee unique recovery from the second moment also for this model. The proof of the following result is given in the appendix.
\begin{theorem} \label{thm:unique-determine-gaussians}
Consider the model given by \eqref{eq:model_sparse_gaussian_structure}.  
Assume that conditions \textup{\textbf{A1}-\textbf{A2}} hold.
Then the second moment $M_2^G$ of the projection images formed from the model in \eqref{eq:model_sparse_gaussian_structure} uniquely determines the set of atom locations and weights $\{ (w_i,\ba_i) : i = 1, \ldots, p \}$ up to a joint rotation and reflection of the atom locations in $\R^3$.
\end{theorem}

	We next discuss an additional extension of the model of \eqref{eq:model_sparse_structure}-\textup{(\ref{eq:image-2})}. Prior works on the sample complexity of cryo-EM \textup{\cite{bandeira2017estimation, pereira2019information, abbe2018estimation}}  \textup{do not} directly apply to the models above.  
	The principal reason is that the measurement formation defined by \eqref{eq:image-2} is not finite-dimensional.
	Therefore, we consider a pixelated and blurred variant of the model.   
The molecule is still specified by a collection of atoms $\{(\ba_i, w_i): i = 1, \ldots, p\}$.
However, each projection image now consists of $2^m \! \times 2^m$ pixels:
\begin{multline} \label{eq:image-modified}
    I_R^{[m]}(j_1,j_2) :=  \! \int_{j_2 \tau }^{(j_2+1)\tau} \!\!\! \int_{j_1 \tau}^{(j_1 + 1) \tau} \!\!\! \left( \sum_{i=1}^{p} w_{i}  \delta_{\pi R \ba_i}(x,y)  \right) \\
     \ast  k(x,y) dx dy   +  \varepsilon(j_1, j_2).
\end{multline}
Here, we discretized $[-1,1]^2$ into equi-sized squares, where $j_1, j_2 \in \{-2^{m-1}, -2^{m-1}+1, \ldots, 2^{m-1} - 1\}$ and $\tau = 1/2^{m-1}$.   
Also, $\ast$ denotes convolution and $k(x,y)$ is the isotropic Gaussian kernel with fixed  variance $\kappa^2$, i.e., $k(x,y) = e^{\frac{-x^2-y^2}{2\kappa^2}}$.
Last, the noise satisfies
  $\varepsilon(j_1, j_2) \overset{i.i.d.}{\sim} \mathcal{N}(0,\sigma^2)$.

In the pixelated model, the (debiased) second population moment equals:
\begin{multline} \label{eq:secondmoment-modified}
    M_2^{[m]}((j_1, j_2), (j_3, j_4))  = \int_{j_4 \tau}^{(j_4 + 1) \tau} \! \int_{j_3 \tau}^{(j_3 + 1)  \tau} \!  \int_{j_2 \tau}^{(j_2 + 1) \tau} \! \int_{j_1 \tau}^{(j_1 + 1) \tau} \\ M_2((x_1,y_1), (x_2, y_2)) \! \ast \! \left(k(x_1, y_1) k(x_2, y_2)\right) dx_1 dy_1 dx_2 dy_2.
\end{multline}

We now state our main result for the pixelated model.  
Its proof relies on Theorems~\ref{thm:unique-determine} and~\ref{thm:theoretical}.

\begin{theorem} \label{thm:modified}

Consider the model given by \eqref{eq:image-modified}.
Fix an integer $p \geq 3$ and real numbers $r > 0$ and $w_{+} > w_{-} > 0$.
Assume that $\textup{\textbf{A1}-\textbf{A2}}$ hold, and for each $i=1, \ldots, p$ we have $ \| \ba_i \| \leq r$ and $w_{-} \leq w_i \leq w_{+}$.
Then, there exists   $m'=m'(p,r,w_+,w_-)$ with the following property.
Whenever $m \geq m'$ and $2^m \times 2^m$ pixels are used in \eqref{eq:image-modified}, 
 then the second moment $M_2^{[m]}$ \textup{(Eq.~\eqref{eq:secondmoment-modified})}
uniquely determines the set of atoms $\{ (\ba_i, w_i) : i = 1, \ldots, p \}$ up to a rotation and reflection in $\mathbb{R}^3$.
\end{theorem}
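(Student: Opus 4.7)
The plan is to reduce the pixelated, blurred setting to the idealized point-mass problem of Theorems~\ref{thm:unique-determine}--\ref{thm:theoretical}, which I take as a black box. Write $\tilde{M}_2 := M_2 \ast (k \otimes k)$ for the Gaussian-smoothed continuous moment; the observable $M_2^{[m]}$ is obtained from $\tilde{M}_2$ by averaging over the $2^{4m}$ pixel hypercubes of side $\tau = 2^{-(m-1)}$. Since the Fourier transform $\hat{k}(\xi) = c\, e^{-\kappa^2 \|\xi\|^2/2}$ never vanishes, convolution by $k \otimes k$ is injective on compactly supported distributions, so $\tilde{M}_2$ uniquely determines $M_2$, which by Theorems~\ref{thm:unique-determine}--\ref{thm:theoretical} determines the atoms $\{(\ba_i, w_i)\}$ up to rotation and reflection. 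It therefore suffices to show that, for $m$ large enough with a threshold depending only on $p, r, w_+, w_-$, the map $\Phi \mapsto M_2^{[m]}(\Phi)$ separates equivalence classes of admissible configurations.

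The crucial structural observation is that $\tilde{M}_2$ lives in a finite-dimensional, semialgebraic family: it is a sum of at most $p^2$ isotropic Gaussians of known width $\kappa$, with centers confined to $[-r,r]^2 \!\times\! [-r,r]^2$ and amplitudes in $[w_-^2, p^2 w_+^2]$. I would parameterize admissible configurations modulo rotation/reflection by the Gram-type invariants $(\|\ba_i\|^2, \langle \ba_i, \ba_j \rangle, w_i)$---the data that Theorem~\ref{thm:unique-determine} recovers---obtaining a bounded semialgebraic parameter set $\mathcal{A} = \mathcal{A}(p, r, w_+, w_-)$. The forward map $F_m : \Phi \mapsto M_2^{[m]}(\Phi)$ is continuous in this parameterization, and is \emph{monotonically informative} as $m$ grows, since level-$(m{+}1)$ pixel averages determine level-$m$ averages by additive aggregation; hence if $F_{m_0}$ is injective on $\mathcal{A}$, so is $F_m$ for every $m \geq m_0$. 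Moreover, the full collection $\{F_m(\Phi)\}_{m \geq 1}$ determines $\tilde{M}_2(\Phi)$ pointwise by Lebesgue differentiation (the summand Gaussians are continuous), which in turn determines $\Phi$ by the reduction above.

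To upgrade this into a uniform threshold $m' = m'(p, r, w_+, w_-)$, I would argue by contradiction on a suitable compactification of $\mathcal{A}$. If no such $m'$ existed, for each $m$ there would exist $\Phi^{(m)} \not\sim \Psi^{(m)}$ in $\mathcal{A}$ with $F_m(\Phi^{(m)}) = F_m(\Psi^{(m)})$; by boundedness, pass to subsequences converging in the closure to some $\Phi^{(\infty)}, \Psi^{(\infty)}$. Using uniform equicontinuity of the Gaussian-bump family on $\mathcal{A}$ together with a quantitative Riemann-sum estimate (the modulus of which depends only on $\kappa, r, p, w_+$), one passes to the limit to conclude $\tilde{M}_2(\Phi^{(\infty)}) = \tilde{M}_2(\Psi^{(\infty)})$, which contradicts injectivity \emph{provided} the limit configurations remain in the class where Theorem~\ref{thm:unique-determine} applies.

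The main obstacle is precisely this last caveat: assumptions \textbf{A1}--\textbf{A2} are open conditions, so the closure of $\mathcal{A}$ can contain degenerate configurations (coincident atoms, collinear pairs, coinciding norms) where Theorem~\ref{thm:unique-determine}'s argument is not directly available. I would handle this by extending Theorem~\ref{thm:unique-determine} to the closure via an appropriate equivalence that identifies configurations producing the same $\tilde{M}_2$---merging coincident atom locations by summing their weights and quotienting by the obvious symmetries on the boundary strata---and then verifying that the extended moment map is still injective on the compactified parameter space using the same real algebraic reasoning underlying Theorem~\ref{thm:unique-determine}. With this extension in place, the compactness argument completes the proof.
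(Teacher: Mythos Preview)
Your reduction---monotone informativeness of pixel levels, recovering $\tilde{M}_2$ in the limit, deconvolving via the nonvanishing $\hat{k}$, then invoking Theorems~\ref{thm:unique-determine}--\ref{thm:theoretical}---matches the paper's. (One slip: $\tilde{M}_2$ is \emph{not} a sum of $p^2$ isotropic Gaussians; the unblurred $M_2$ is supported on the $3$-dimensional sets $S_{ij}$ of Lemma~\ref{lem:support-M}, not on finitely many points. This is cosmetic, since you can parameterize by $(\ba_i, w_i)$ directly.)

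The genuine gap is the compactness argument for a uniform $m'$. You correctly flag the obstacle, but the proposed fix is where the difficulty actually lives, and it is not carried out. Two separate failure modes: (i) the limit $\Phi^{(\infty)}$ of your ``true'' configurations may itself violate \textbf{A1}--\textbf{A2}, so Theorem~\ref{thm:unique-determine} says nothing about it; (ii) even granting an extended uniqueness statement on the closure, you also need the relation $\not\sim$ to be closed---$\Phi^{(m)} \not\sim \Psi^{(m)}$ does not prevent $\Phi^{(\infty)} \sim \Psi^{(\infty)}$---i.e., you need the quotient $\overline{\mathcal{A}}/{\sim}$ to be Hausdorff. Establishing uniqueness on every boundary stratum (merged atoms, collinear pairs, equal norms) and closedness of $\sim$ is a substantial undertaking, plausibly harder than the theorem itself.

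The paper sidesteps this entirely via Noetherianity. With $X = (B(0,r)\times[w_-,w_+])^p$ compact, the ring $\mathcal{O}(X^{\times 2})$ of real analytic functions is Noetherian (Frisch). The ideals $\mathcal{I}^{[m]} \subseteq \mathcal{O}(X^{\times 2})$ generated by the coordinates of $M_2^{[m]}(\{(\ba_i,w_i)\}) - M_2^{[m]}(\{(\bb_i,v_i)\})$ form an ascending chain (precisely your monotonicity observation), hence stabilize at some $m'=m'(p,r,w_+,w_-)$. Stabilization of ideals gives stabilization of zero sets, so agreement at level $m'$ forces agreement at all $m \geq m'$; by continuity this gives $\tilde{M}_2$-equality on $[-1,1]^4$, and by real analyticity, everywhere on $\R^4$. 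Deconvolve and apply Theorem~\ref{thm:theoretical} to the \emph{given} $\{(\ba_i,w_i)\}$, which by hypothesis satisfies \textbf{A1}--\textbf{A2}. No degenerate limits are ever taken.
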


As the details are technical, we prove Theorem~\ref{thm:modified} in the appendix. 
We only use two properties of the Gaussian kernel $k$: that it is real-analytic and that its Fourier transform does not vanish.

\begin{corollary}
\label{cor:sample}
Assume the setting of Theorem~\ref{thm:modified} 
with $m \geq m'$. Then, the sample complexity for generic unique recovery (in the sense of \textup{\cite{bandeira2017estimation}}) is $n = \omega(\sigma^4)$ as $\sigma \rightarrow \infty$.
\end{corollary}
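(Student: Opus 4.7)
The plan is to reduce the corollary to the combination of two ingredients already in hand: (i) the population-level uniqueness furnished by Theorem~\ref{thm:modified}, and (ii) the general sample-complexity framework for the method of moments in the low-SNR regime, developed in \cite{bandeira2017estimation, perry2019sample, abbe2018estimation}. Given these, the corollary becomes an essentially one-line consequence.

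First I would invoke Theorem~\ref{thm:modified} to conclude that, for $m \geq m'$, the debiased population second moment $M_2^{[m]}$ determines the atom set $\{(\ba_i, w_i)\}_{i=1}^p$ uniquely (up to rotation and reflection in $\mathbb{R}^3$). In the terminology of \cite{bandeira2017estimation}, this says that the lowest-order identifying moment for this model has degree $d=2$, whereas without sparsity one has $d=3$ \cite{bandeira2017estimation, fan2021maximum}.

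Second I would appeal to the generic-recovery sample-complexity principle of \cite{bandeira2017estimation, perry2019sample, abbe2018estimation}, which states that if the $d$-th moment suffices for identifiability then generic unique recovery is achievable when $n = \omega(\sigma^{2d})$. The underlying concentration argument, which I would sketch briefly, proceeds as follows. Writing $I_{R_i} = I_{R_i}^{\mathrm{clean}} + \varepsilon_i$ and expanding the empirical moment
\[
\overline{M_2^{[m]}} = \frac{1}{n} \sum_{i=1}^n I_{R_i} \otimes I_{R_i} - B(\sigma^2),
\]
the dominant stochastic term in the $\sigma \to \infty$ regime is the degree-2 noise monomial whose per-sample variance is of order $\sigma^4$. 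Averaging over $n$ i.i.d.\ samples yields a variance of order $\sigma^4/n$ per entry, so that $\overline{M_2^{[m]}} - M_2^{[m]} = o(1)$ with high probability precisely when $n = \omega(\sigma^4)$. Combining this concentration with the identifiability from Theorem~\ref{thm:modified} yields generic unique recovery with the asserted sample complexity.

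The only mild subtlety, and the one place I would slow down, is ensuring that the identifiability in Theorem~\ref{thm:modified} is \emph{stable} in the precise semi-algebraic sense required by \cite{bandeira2017estimation}, so that $o(1)$ perturbations of the population moment translate into $o(1)$ perturbations of the recovered atom set. This is standard for generic points on an algebraic variety and follows from the polynomial/algebraic nature of the recovery procedure in Theorem~\ref{thm:theoretical} (whose steps are rational operations on the second moment), but one should formally quote the semi-algebraic genericity framework of \cite{bandeira2017estimation} rather than rederive it. I do not anticipate any genuine obstacle beyond this bookkeeping step.
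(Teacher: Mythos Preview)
Your proposal is correct and follows essentially the same route as the paper: invoke Theorem~\ref{thm:modified} to obtain identifiability from the second moment up to the $O(3)$ action, then appeal to the general framework of \cite{bandeira2017estimation} (or \cite{pereira2019information}) to convert $d=2$ identifiability into an $n=\omega(\sigma^4)$ sample complexity. The paper's own proof is a single sentence doing exactly this, noting only that \textbf{A1}--\textbf{A2} are Zariski-open conditions so that the genericity hypothesis of \cite{bandeira2017estimation} is met; your extra discussion of concentration and stability is fine but not needed once you quote that framework.
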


The rest of the section provides the proofs of Theorems~\ref{thm:unique-determine} and \ref{thm:theoretical}, with Theorem \ref{thm:modified}, Corollary \ref{cor:sample} and supporting results shown in the appendix.  
We emphasize that Algorithm~\ref{alg:theoretical} is a theoretical algorithm, not intended for use in practice due to its noise sensitivity as explained in Remark~\ref{rem:not-practical}. 
  By contrast,  Algorithm~\ref{alg:alternating} in the subsequent section is built for practical situations.

\subsection{Support of $M_2$} \label{subsec:support}

To recover the atoms from $M_2$, the main information that we use is actually qualitative.
Specifically, we rely on the particular structure of the support of the second moment $M_2$ in $\R^2 \times \R^2$.  To describe this, we need to first understand the possible images of one pair of atoms.

\begin{definition}\label{def:theta}
For $i, j = 1, \ldots, p$, let $\theta_{ij}: \SO \rightarrow \R^2 \times \R^2$ be the map given by $\theta_{ij}(R) = (\pi R \ba_i, \pi R \ba_j)$.
\end{definition}

\begin{definition}\label{def:Sij}
For $i, j = 1, \ldots, p$, let $S_{ij} \subseteq \R^2 \times \R^2$ be the image of $\theta_{ij}$, i.e., $S_{ij} = \{(\bx_1,\bx_2) \in \R^2 \times \R^2 : \exists R \in \SO \textup{ s.t. } \pi R \ba_i = \bx_1, \pi R \ba_j = \bx_2\}$.
\end{definition}

The next lemma characterizes $S_{ij} \subseteq \R^2 \times \R^2$ as the solution set to a system of polynomial equations and inequalities.  This will enable proof techniques from real algebraic geometry. 

\begin{lemma} \label{lem:Sij}
Assume $i \neq j$. Then the set $S_{ij}$ is connected, compact and semialgebraic. 
Letting $((x_1,y_1),(x_2,y_2))$ be variables on $\R^2 \times \R^2$, $S_{ij}$ is cut out by one quartic equation and two quadratic inequalities:
\begin{align} \label{eq:Sij}
   & \left( \| \ba_i \|^2 \!- x_1^2 \!- y_1^2 \right)\!\!\left( \| \ba_j \|^2 \! - x_2^2 \! - y_2^2 \right) \!=\! \left( \langle \ba_i, \ba_j \rangle \! - x_1 x_2 \! - y_1 y_2 \right)^2\!\!, \nonumber \\
  &  \, x_1^2 + y_1^2 \leq \| \ba_i \|^2 \quad \quad \quad \text{and}  \quad \quad \quad 
   x_2^2 + y_2^2 \leq \| \ba_j \|^2.
\end{align}
It has dimension $3$ as a semialgebraic set if condition $\textup{\textbf{A1}}$ holds.
\end{lemma}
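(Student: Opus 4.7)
The plan is to identify $S_{ij}$ explicitly with the semialgebraic set cut out by \eqref{eq:Sij} and to read the topological and dimension-theoretic properties from this identification together with basic properties of the parameterization $\theta_{ij}$. The easy parts are topological: since $\SO$ is compact, connected, and semialgebraic (cut out in $\R^{3 \times 3}$ by $R^\top R = I$ and $\det R = 1$) and $\theta_{ij}$ is polynomial in the entries of $R$, the image $S_{ij}$ is compact, connected, and semialgebraic (the last point by Tarski--Seidenberg).

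For the cutting-out equations I would verify both inclusions directly. For the ``$\subseteq$'' direction, writing $R\ba_i = (x_1,y_1,z_1)$ and $R\ba_j = (x_2,y_2,z_2)$, orthogonality of $R$ forces $z_1^2 = \|\ba_i\|^2 - x_1^2 - y_1^2 \geq 0$, $z_2^2 = \|\ba_j\|^2 - x_2^2 - y_2^2 \geq 0$, and $z_1 z_2 = \langle \ba_i, \ba_j\rangle - x_1 x_2 - y_1 y_2$; the identity $(z_1 z_2)^2 = z_1^2 z_2^2$ is then exactly the quartic equation in \eqref{eq:Sij}. For the ``$\supseteq$'' direction, the two inequalities let me solve for $z_1, z_2 \in \R$ with the prescribed squares, and the quartic equation is precisely what guarantees the signs can be chosen compatibly with the product constraint $z_1 z_2 = \langle \ba_i, \ba_j\rangle - x_1 x_2 - y_1 y_2$. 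The lifted vectors $\mathbf{v}_k = (x_k, y_k, z_k)$ then share a $2 \times 2$ Gram matrix with $(\ba_i, \ba_j)$, so a standard linear-algebra argument produces $Q \in O(3)$ sending $\ba_i \mapsto \mathbf{v}_1$ and $\ba_j \mapsto \mathbf{v}_2$; if $\det Q = -1$, composing with a reflection across a $2$-plane containing $\mathbf{v}_1$ and $\mathbf{v}_2$ fixes both vectors and flips the determinant, yielding the desired $R \in \SO$.

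Finally, for the dimension claim under \textbf{A1}, it suffices to exhibit one $R$ at which the differential of $\theta_{ij}$ has rank $3$: the constant-rank theorem then embeds a $3$-dimensional submanifold into $S_{ij}$, and the trivial upper bound $\dim S_{ij} \leq \dim \SO = 3$ closes the argument. Identifying $\mathfrak{so}(3)$ with $\R^3$ via $X = [\omega]_\times$, the differential sends $\omega$ to $(\pi(\omega \times R\ba_i), \pi(\omega \times R\ba_j))$, whose kernel consists of $\omega$ for which both $\omega \times R\ba_i$ and $\omega \times R\ba_j$ are parallel to $e_3$. Choosing $R$ so that neither $R\ba_i$ nor $R\ba_j$ lies in the $xy$-plane, this forces $\omega$ to be parallel to both $R\ba_i$ and $R\ba_j$, hence $\omega = 0$ by \textbf{A1}. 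I expect the main technical step to be this final rank computation: one must check that the genericity hypothesis on $R$ (avoiding various vertical-coincidence loci) defines a nonempty Zariski-open subset of $\SO$, which is routine but must be carried out explicitly.
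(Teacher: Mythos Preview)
Your proposal is correct.  The topological claims (connectedness, compactness, semialgebraicity) and the two inclusions establishing the explicit semialgebraic description agree essentially verbatim with the paper's argument; the paper likewise reduces ``$\supseteq$'' to matching a $2\times 2$ Gram matrix, and your added remark about fixing the determinant via a reflection through a plane containing $\mathbf{v}_1,\mathbf{v}_2$ is the standard way to make explicit the step the paper takes for granted.

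The one substantive difference is the dimension argument.  You exhibit a single $R$ (any rotation with $R\ba_i, R\ba_j \notin e_3^{\perp}$) at which the differential has trivial kernel, via the elementary observation that $\omega \times R\ba_k \parallel e_3$ with $R\ba_k \notin e_3^{\perp}$ forces $\omega \times R\ba_k = 0$.  The paper instead writes the differential as a $4\times 3$ matrix $W(R)$ in the entries of $R\ba_i, R\ba_j$, and determines the entire rank-drop locus by computing a primary decomposition of the ideal of $3\times 3$ minors in a computer algebra system, obtaining that $W(R)$ is rank-deficient precisely when $R\ba_i, R\ba_j$ are linearly dependent or both lie in $e_3^{\perp}$.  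Your route is shorter and self-contained, avoiding any symbolic computation; the paper's route yields strictly more information (the full singular locus of $\theta_{ij}$), though that extra information is not used elsewhere in the proofs.
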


A few different examples of the sets $S_{ij}$ are illustrated in Fig.~\ref{fig:Sij}, for varying values of $\langle \ba_i, \ba_j \rangle$ and $\|\ba_i\|, \|\ba_j\|$.
There we show the projection of $S_{ij}$ to $\mathbb{R}^3$ when $y_2$ is dropped.
 
\begin{figure}
    \centering
    \includegraphics[width = \linewidth]{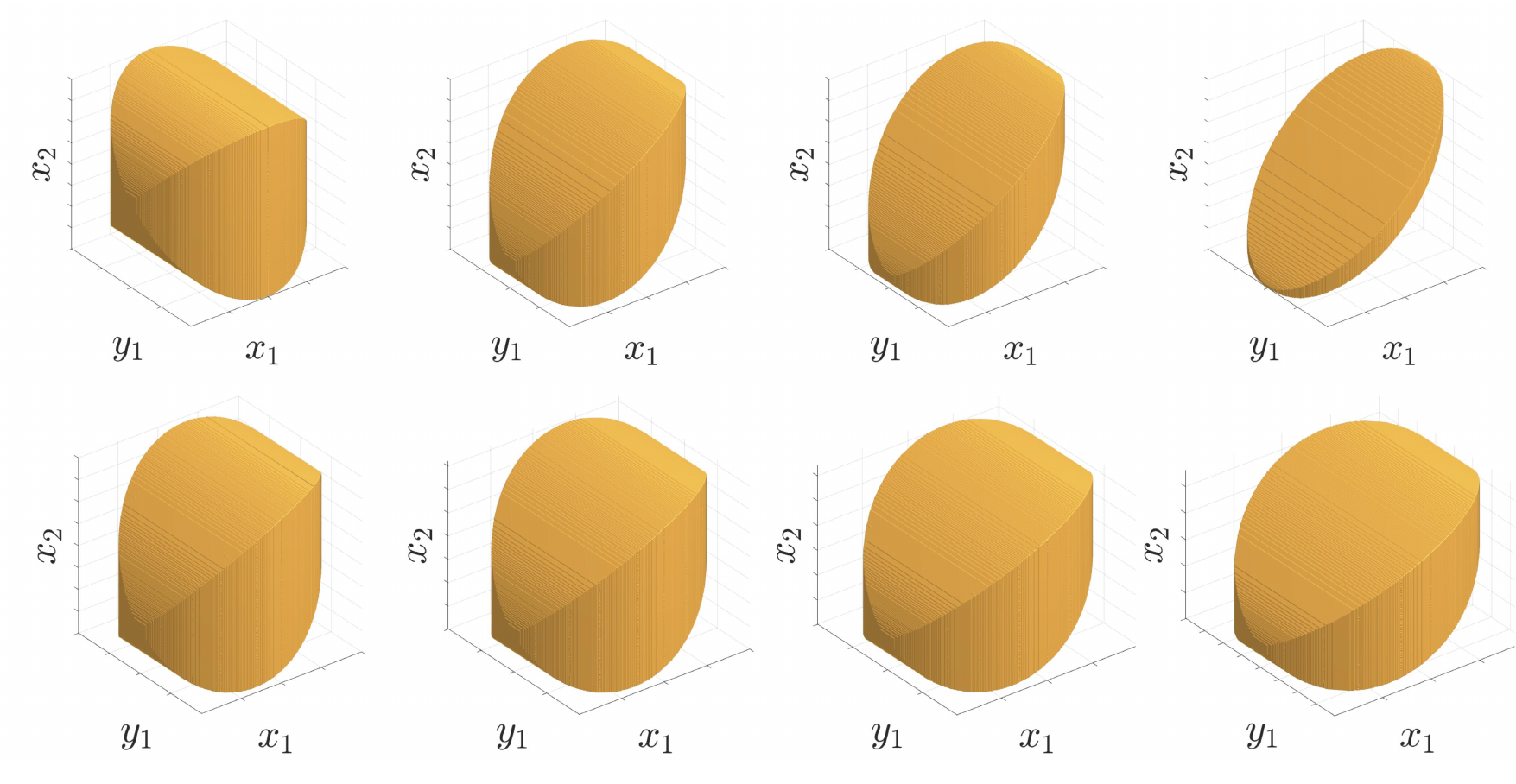}
    \caption{Illustration of the semialgebraic sets $S_{ij}$ described in Lemma~\ref{lem:Sij}, projected onto $(x_1, y_1, x_2)$-space. Upper row from left to right: $\|\ba_i\| = \|\ba_j\| = 1$ and $\langle \ba_i, \ba_j \rangle = \cos{\frac{\pi}{2}},\cos{\frac{\pi}{4}},\cos{\frac{\pi}{6}},\cos{\frac{\pi}{20}}$, respectively. Lower row from left to right: $\langle \ba_i , \ba_j \rangle = \|\ba_j\|\cos{\frac{\pi}{3}}$, $\|\ba_i\| = 1$ and $\|\ba_j\| = 1, 0.9, 0.8, 0.7$, respectively.  By Lemma~\ref{lem:support-M}, these sets make up the support of $M_2$, which is the basis of our theory.
    }
    \label{fig:Sij}
\end{figure}

The next result is immediate from Definitions~\ref{def:theta} and \ref{def:Sij}.

\begin{lemma} \label{lem:support-M}
The second moment is
\begin{equation} \label{eq:M2-pushforward}
  M_{2} = \sum_{i,j=1}^k w_i w_j (\theta_{ij})_{*}(\mu), 
\end{equation}
 where the subscripts indicate the pushforward measure defined by $(\theta_{ij})_{*}(\mu)(\cdot) = \mu \left( \theta_{ij}^{-1}(\cdot) \right)$.
In particular,  the support of 
$M_2$ 
is $\cup_{i,j=1}^p S_{ij}$.
\end{lemma}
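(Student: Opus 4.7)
The plan is to derive the pushforward representation by direct substitution and then read off the support using positivity of the weights. First I would start from the definition of the second moment given in \eqref{eq:moment-population-second}, interchange the finite sum $\sum_{i,j}$ with the integral over $\SO$ by linearity, and observe that each resulting integral is, by definition of pushforward, the action of $(\theta_{ij})_{*}(\mu)$ on the product of delta measures. More precisely, for any continuous compactly supported test function $\varphi$ on $\R^2 \times \R^2$,
\begin{equation*}
\int \varphi \, d\bigl((\theta_{ij})_{*}\mu\bigr) \;=\; \int_{\SO} \varphi(\pi R \ba_i, \pi R \ba_j)\, d\mu(R),
\end{equation*}
which is exactly what the ``integral of the product of deltas'' in \eqref{eq:moment-population-second} means when interpreted as a measure on $\R^2 \times \R^2$. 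Combining these observations yields \eqref{eq:M2-pushforward}.

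For the support statement, I would use three standard facts. First, the Haar measure $\mu$ has full support on $\SO$, since every non-empty open subset of a compact group has positive Haar measure. Second, by Definition~\ref{def:Sij} the image of $\theta_{ij}$ is exactly $S_{ij}$, and this set is already compact (hence closed) by Lemma~\ref{lem:Sij}; therefore the support of the pushforward $(\theta_{ij})_{*}(\mu)$ equals $\overline{\theta_{ij}(\SO)} = S_{ij}$. Third, the weights $w_i w_j$ are strictly positive, so \eqref{eq:M2-pushforward} expresses $M_2$ as a finite non-negative linear combination of non-negative measures. No cancellation between the summands is possible, so
\begin{equation*}
\operatorname{supp}(M_2) \;=\; \bigcup_{i,j=1}^{p} \operatorname{supp}\bigl((\theta_{ij})_{*}(\mu)\bigr) \;=\; \bigcup_{i,j=1}^{p} S_{ij}.
\end{equation*}

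There is no real obstacle beyond bookkeeping: the only subtle point is being careful that the Dirac product $\delta_{\pi R \ba_i}(x_1,y_1)\delta_{\pi R \ba_j}(x_2,y_2)$ appearing in \eqref{eq:moment-population-second} is to be understood, after integration against $d\mu(R)$, as the measure on $\R^2 \times \R^2$ defined by pairing with test functions as above. Once this is clarified, both assertions of the lemma follow without further computation from Definitions~\ref{def:theta} and \ref{def:Sij} and from Lemma~\ref{lem:Sij}, which is why the authors call the result immediate.
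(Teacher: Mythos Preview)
Your proposal is correct and is essentially the same approach as the paper's, which simply declares the result ``immediate from Definitions~\ref{def:theta} and \ref{def:Sij} and~\eqref{eq:moment-population-second}.'' You have merely spelled out the details the authors suppress: interpreting the $\delta$-product integral via test functions to identify each summand as a pushforward, and using positivity of the $w_iw_j$ together with compactness of $S_{ij}$ to read off the support.
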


\subsection{Information-theoretic uniqueness: Proof of Theorem~\ref{thm:unique-determine}} 
\label{sec:information}
We begin by proving Theorem~\ref{thm:unique-determine}. 
The key is a converse to Lemma~\ref{lem:Sij}.  
While Lemma~\ref{lem:Sij} implies the quartic equation in (\ref{eq:Sij}) (plus the quadratic inequalities there) determine the set $S_{ij}$, we need that $S_{ij}$ determines the quartic.
The proof of this converse uses results from real algebraic geometry~\cite{bochnak2013real}.

\begin{lemma}\label{lem:Zar-closure}
Assume that condition $\textup{\textbf{A1}}$ holds.  
Let $i \neq j$.  Then, the ideal of the real Zariski closure of $S_{ij}$ in $\R^2 \times \R^2$ is principal and generated by the quartic polynomial $q_{ij}$:
\begin{align} \label{eq:qij-expand}
    & \left( \| \ba_i \|^2 \!- x_1^2 \!- y_1^2 \right)\!\!\left( \| \ba_j \|^2 \! - x_2^2 \! - y_2^2 \right) - \left( \langle \ba_i, \ba_j \rangle \! - x_1 x_2 \! - y_1 y_2 \right)^2\!\! \nonumber \\[0.5em] 
   & = \left( \| \ba_i \|^2 \| \ba_j \|^2 - \langle \ba_i, \ba_j \rangle^2 \right) - \| \ba_j \|^2 x_1^2 - \| \ba_j \|^2 y_1^2 \nonumber \\[0.1em] 
   & \hspace{0.8em} - \| \ba_i \|^2 x_2^2 - \| \ba_i \|^2 y_2^2 
    + 2 \langle \ba_i, \ba_j \rangle x_1 x_2  + 2 \langle \ba_i, \ba_j \rangle y_1 y_2  \nonumber \\[0.1em]
    & \hspace{0.8em} + x_1^2 y_2^2 + y_1^2 x_2^2  - 2 x_1 y_1 x_2 y_2. 
\end{align}
Further, $q_{ij}$ is irreducible over $\R$.
\end{lemma}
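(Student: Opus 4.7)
The plan is to establish the lemma in three stages: (i) show that $q_{ij}$ vanishes on $S_{ij}$; (ii) prove that $q_{ij}$ is irreducible over $\mathbb{C}$, hence over $\R$; (iii) upgrade the resulting ideal containment to equality via a dimension-counting argument based on Lemma~\ref{lem:Sij}. For (i), fix $(\bx_1,\bx_2) = (\pi R \ba_i, \pi R \ba_j) \in S_{ij}$ with $\bx_k = (x_k,y_k)$, and let $z_k$ denote the third coordinate of $R\ba_k$. Orthogonality of $R$ yields $z_k^2 = \|\ba_k\|^2 - x_k^2 - y_k^2$ and $z_1 z_2 = \langle \ba_i,\ba_j\rangle - x_1 x_2 - y_1 y_2$; squaring the latter and substituting the former gives $q_{ij}(\bx_1,\bx_2)=0$, so $(q_{ij})$ is contained in the ideal of polynomials vanishing on $S_{ij}$.

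For (ii), Lagrange's identity shows that the degree-$4$ homogeneous part of $q_{ij}$ equals $Q^2$, where $Q := x_1 y_2 - y_1 x_2$ is the irreducible $2\times 2$ determinant polynomial. A nontrivial factorization $q_{ij} = AB$ in $\mathbb{C}[x_1,y_1,x_2,y_2]$ cannot involve a linear factor, since $Q^2$ has no linear divisor; so $\deg A = \deg B = 2$ and, after rescaling, $A = Q + A_1 + A_0$ and $B = Q + B_1 + B_0$ with $A_1,B_1$ linear and $A_0,B_0$ constant. Vanishing of the degree-$3$ part of $q_{ij}$ then forces $B_1 = -A_1$, and matching the degree-$2$ part reduces to the symmetric $4\times 4$ matrix identity $(A_0+B_0)N - M = \ell\ell^T$, where $\ell \in \mathbb{C}^4$ is the coefficient vector of $A_1$, and $N, M$ are the matrices of $Q$ and of the degree-$2$ part of $q_{ij}$, respectively. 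Because $N$ has zero diagonal while $M_{11} = M_{22} = -\|\ba_j\|^2$, the top-left $2\times 2$ block of $(A_0+B_0)N - M$ equals $\|\ba_j\|^2 I_2$, whose determinant $\|\ba_j\|^4$ is nonzero by condition \textbf{A1} (which forces $\ba_j \neq 0$); this contradicts the rank-$1$ requirement on $\ell\ell^T$, completing the irreducibility proof.

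For (iii), let $p \in \R[x_1,y_1,x_2,y_2]$ vanish on $S_{ij}$. If $q_{ij} \nmid p$ in $\R[x]$, then (since $q_{ij} \in \R[x]$) it does not divide $p$ in $\mathbb{C}[x]$ either, and by the irreducibility from (ii) they are coprime in $\mathbb{C}[x]$; consequently $V_\mathbb{C}(p) \cap V_\mathbb{C}(q_{ij})$ has complex dimension at most $2$. However, Lemma~\ref{lem:Sij} asserts that $S_{ij}$ has real dimension $3$ under \textbf{A1}, and the complex Zariski closure of any real semialgebraic set has complex dimension at least its real dimension~\cite{bochnak2013real}. Since this closure is contained in $V_\mathbb{C}(p) \cap V_\mathbb{C}(q_{ij})$, we obtain a contradiction. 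Hence the ideal of polynomials vanishing on $S_{ij}$ is exactly $(q_{ij})$, proving that the ideal of the real Zariski closure of $S_{ij}$ is principal and generated by $q_{ij}$.

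The main obstacle is expected to be the $(2,2)$-case of step (ii): although $Q^2$ rigidly constrains candidate leading parts, closing the argument requires careful bookkeeping of the degree-$3$ and degree-$2$ components of $AB$, followed by reformulating the matching condition as a symmetric matrix rank equation whose failure is witnessed by a single $2\times 2$ minor (with \textbf{A1} invoked at precisely this point). Steps (i) and (iii) are routine once (ii) is in place.
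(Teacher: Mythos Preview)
Your proof is correct and, at its core, matches the paper's argument: both hinge on the observation that the top-degree part of $q_{ij}$ is $Q^2$ with $Q = x_1y_2 - y_1x_2$ irreducible, forcing any putative nontrivial factorization to have both factors with leading term $Q$, and then deriving a contradiction from the lower-degree coefficients. Your rank-$1$ matrix reformulation of the degree-$2$ matching is a clean repackaging of the paper's coefficient-by-coefficient bookkeeping (the paper equates coefficients of $x_1^2, y_1^2, x_2^2, y_2^2$, then of $x_1, y_1, x_2, y_2$, then of $x_1y_1$ to reach the same contradiction).

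The logical organization differs slightly. The paper first uses that $S_{ij}$ is the image of the irreducible variety $\SO$ under a polynomial map to conclude $\overline{S_{ij}}$ is irreducible of dimension $3$, hence its ideal is prime of height $1$ and therefore principal, say $\langle f\rangle$; it then runs the factorization analysis on $q_{ij} = fg$ to force $g$ scalar. You instead prove irreducibility of $q_{ij}$ directly (over $\mathbb{C}$, which is slightly stronger than the paper's conclusion over $\R$), and then run a dimension argument---using Lemma~\ref{lem:Sij} for $\dim S_{ij} = 3$---to show $(q_{ij})$ exhausts the vanishing ideal. Your route avoids invoking the irreducibility of $\SO$, at the modest cost of the extra dimension-comparison step; the paper's route gets principality for free but still needs the same factorization analysis to pin down the generator.
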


\begin{corollary} \label{cor:irredundant}
Assume that conditions \textup{\textbf{A1}}-\textup{\textbf{A2}} hold.  
Then, the irredundant irreducible decomposition of the Zariski closure of the support of $M_2$ is
\begin{equation} \label{eq:irredundant}
    \{ (\bx_1, \bx_2)  : \bx_1 = \bx_2 \} \,\, \cup  \,\, \bigcup_{i \neq j} \overline{S_{ij}}.
  \end{equation}
\end{corollary}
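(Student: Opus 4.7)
The plan is to combine Lemma \ref{lem:support-M}, which describes the support of $M_2$ as $\bigcup_{i,j} S_{ij}$, with Lemma \ref{lem:Zar-closure}, which identifies each off-diagonal piece as an irreducible hypersurface, and then to separately handle the diagonal pieces and verify that no listed component is contained in another. First, I would split $\bigcup_{i,j=1}^p \overline{S_{ij}}$ into diagonal ($i = j$) and off-diagonal ($i \neq j$) contributions. Each diagonal piece $S_{ii} = \{(\bx,\bx) : \bx \in \R^2,\ \|\bx\| \leq \|\ba_i\|\}$ is a full-dimensional disk inside the $2$-dimensional linear subspace $\Delta := \{\bx_1 = \bx_2\}$, and since a disk is Zariski dense in the plane containing it, this gives $\overline{S_{ii}} = \Delta$ for every $i$. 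For the off-diagonal pieces, Lemma \ref{lem:Zar-closure} directly yields $\overline{S_{ij}} = \{q_{ij} = 0\}$, a hypersurface in $\R^4$.

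Next I would establish irreducibility of each component. The diagonal $\Delta$ is linear and thus obviously irreducible. For $i \neq j$, since $\R[x_1,y_1,x_2,y_2]$ is a UFD and $q_{ij}$ is irreducible over $\R$ by Lemma \ref{lem:Zar-closure}, the ideal $(q_{ij})$ is prime; because this ideal already equals the vanishing ideal of $\overline{S_{ij}}$ (same lemma), the component $\overline{S_{ij}}$ is irreducible as a real algebraic set. This is the place where some real-algebraic-geometry care is in principle needed, but it is rendered routine by the fact that Lemma \ref{lem:Zar-closure} hands us the vanishing ideal principally, so primality over $\R$ follows from unique factorization.

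The remaining and most technical step is irredundancy, for which I would treat three subcases. To show $\Delta \not\subseteq \overline{S_{ij}}$ for any $i \neq j$, I would substitute $x_2 = x_1$, $y_2 = y_1$ into (\ref{eq:qij-expand}) and, writing $t := x_1^2 + y_1^2$, obtain $q_{ij}|_\Delta = (\|\ba_i\|^2\|\ba_j\|^2 - \langle\ba_i,\ba_j\rangle^2) - t\,\|\ba_i - \ba_j\|^2$, which is not identically zero in $t$ by condition \textbf{A1} (since both linear dependence and coincidence of $\ba_i$ and $\ba_j$ are excluded). The reverse inclusion $\overline{S_{ij}} \not\subseteq \Delta$ follows from the dimension count $\dim \overline{S_{ij}} = 3 > 2 = \dim \Delta$ recorded in Lemma \ref{lem:Sij}. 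Finally, for two distinct off-diagonal ordered pairs $(i,j) \neq (k,l)$, any inclusion between the irreducible hypersurfaces $\overline{S_{ij}}$ and $\overline{S_{kl}}$ in $\R^4$ would force equality, hence proportionality $q_{ij} = c\, q_{kl}$ for some nonzero $c \in \R$; comparing the coefficient of $x_1^2 y_2^2$ (which by (\ref{eq:qij-expand}) is $1$ in both polynomials, since the degree-$4$ part of $q_{ij}$ is independent of $i,j$) gives $c = 1$, and comparing the coefficients of $x_1^2$ and $x_2^2$ then yields $\|\ba_j\| = \|\ba_l\|$ and $\|\ba_i\| = \|\ba_k\|$, forcing $j = l$ and $i = k$ by condition \textbf{A2} and contradicting $(i,j) \neq (k,l)$. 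The main obstacle throughout is this last coefficient-comparison case, but it reduces to finite bookkeeping once the quartic normal form (\ref{eq:qij-expand}) is in hand.
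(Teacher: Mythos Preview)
Your proposal is correct and follows essentially the same route as the paper's proof: split the support into diagonal and off-diagonal pieces via Lemma~\ref{lem:support-M}, identify $\overline{S_{ii}}$ with the diagonal plane $\Delta$, invoke Lemma~\ref{lem:Zar-closure} for irreducibility of the off-diagonal components, and verify irredundancy by coefficient comparison of the normalized quartics $q_{ij}$ together with the restriction $q_{ij}|_\Delta \not\equiv 0$. The only minor differences are that you spell out why $\overline{S_{ii}} = \Delta$ (disk Zariski-dense in a plane) and use the linear coefficient $-\|\ba_i - \ba_j\|^2$ of $q_{ij}|_\Delta$ rather than its constant term $\|\ba_i\|^2\|\ba_j\|^2 - \langle \ba_i,\ba_j\rangle^2$, and you make the dimension argument $\overline{S_{ij}} \not\subseteq \Delta$ explicit; the paper leaves these as understood.
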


We can now prove the information-theoretic uniqueness.

\begin{proof}[Proof of Theorem~\ref{thm:unique-determine}] 
The support of $M_2$ determines the real radical prime ideal of each of  top-dimensional irreducible component of its Zariski closure. 
By \textbf{A1}-\textbf{A2}, Corollary~\ref{cor:irredundant} and Lemma~\ref{lem:Zar-closure}, these ideals are $\langle q_{ij} \rangle = \{q_{ij}h \! : h \in \R[x_1, y_1, x_2, y_2]\}$ for $i \neq j$.  
The ideal $\langle q_{ij} \rangle$ uniquely determines $q_{ij}$, since the coefficient of $x_1^2y_2^2$ in \eqref{eq:qij-expand} is $1$. 
Extracting the coefficients of $x_2^2, x_1^2, x_1x_2$ in (\ref{eq:qij-expand}), $q_{ij}$ determines the triple $(\| \ba_i \|^2, \| \ba_j \|^2, \langle \ba_i, \ba_j \rangle)$.  
Ranging over $i,j$, we have proven that the support of $M_2$ fixes the set:
\begin{equation} \label{eq:triples}
    \{(\| \ba_i \|^2, \| \ba_j \|^2, \langle \ba_i, \ba_j \rangle ) : i \neq j  \}.
\end{equation}

By \textbf{A2}  and our assumption that the norms $\| \ba_i \|$ are descending,  knowledge of \eqref{eq:triples} lets us fill in the Gram matrix: 
\begin{equation} \label{eq:cholesky}
    G \, = \, A^{\top} A \, \in \, \R^{p \times p},
\end{equation}
where
\begin{equation} \label{eq:def-A}
    A \, = \, \begin{pmatrix}
    \ba_1 & \ldots & \ba_p 
    \end{pmatrix} \, \in \, \R^{3 \times p}.
\end{equation}
However $G$ determines $A$ up to left multiplication by a $3 \times 3$ orthogonal matrix. Indeed considering a truncated rank-3 eigendecomposition, we write
\begin{equation} \label{eq:truncated-eigen}
    G = Q D Q^{\top}\!,
\end{equation}
where $Q \in \R^{p \times 3}$ has orthonormal columns and $D \in \R^{3 \times 3}$ is diagonal and positive-semidefinite.  Then,
\begin{equation} \label{eq:truncated-eig}
    A = O D^{1/2} Q^{\top}\!,
\end{equation}
for some $O \in \operatorname{O}(3)$.
Therefore, the atoms' locations $\ba_i$ are determined up to a global rotation and reflection.
\end{proof}

\subsection{Recovery algorithm: Proof of Theorem~\ref{thm:theoretical}} \label{subsec:proof-recovery}

Now, we move forward and prove Theorem~\ref{thm:theoretical}.
We present Algorithm~\ref{alg:theoretical} for efficiently recovering the atoms $(\ba_i, w_i)$ from $M_2$.  
The algorithm is theoretical in that it relies on oracle access to the following information.
\begin{assumption}\label{assumption:oracle}
We assume oracle access to:
\begin{itemize}
    \item[\textup{\textbf{O1.}}] $\{ sample(S_{ij}) : i \neq j \}$, where $sample(S_{ij})$ consists of four or more Zariski-generic points on $S_{ij}$;
    \item[\textup{\textbf{O2.}}] 
    the value of the measure $M_2$ on the set $S_{ij}$ for all $i \neq j$. 
\end{itemize}
\end{assumption}

\begin{remark} \label{rem:not-practical}
In principle, $\textup{\textbf{O1}}$ and $\textup{\textbf{O2}}$ could be estimated from the sample moment $\overline{M_2}$~\textup{(\ref{eq:moments-empirical-second})} if $n=\omega(\sigma^4)$.
It would require the ability to identify points in the support of $M_2$ and cluster them according to the components $S_{ij}$.
However this encounters difficulty when dealing with noisy moments  discretized in pixels.  
The next section is dedicated to a different computational framework, which is suited for practical settings. 
\end{remark}

Proceeding, Algorithm~\ref{alg:theoretical} interpolates $sample({S_{ij}})$ to recover $q_{ij}$ from Lemma~\ref{lem:Zar-closure} (Eq.~\eqref{eq:qij-expand}).

\begin{lemma} \label{lem:interpolate}
Assume that condition \textup{\textbf{A1}} holds, and  \textup{\textbf{O1}} is known.
Let $sample({S_{ij}}) = \{ \left((x_{1k}, y_{1k}), (x_{2k}, y_{2k})\right) : k = 1, \ldots, |sample(S_{ij})| \}$.  Consider the matrix
\begin{equation} \label{eq:interpolation-matrix}
 \begin{pmatrix} 
& &  1  & & \\[0.2em] 
& &  x_{1k}^2 + y_{1k}^2  & & \\[0.2em] 
& \ldots &  x_{2k}^2 + y_{2k}^2  & \ldots & \\[0.2em] 
& &  x_{1k}x_{2k} + y_{1k} y_{2k}  &  & \\[0.2em] 
& &  x_{1k}^2 y_{2k}^2 + y_{1k}^2 x_{2k}^2 - 2x_{1k}y_{1k}x_{2k}y_{2k}  & & 
\end{pmatrix}^{\!\!\!\top}\!.
\end{equation}
Then it has rank $4$, with kernel spanned by
\begin{equation} \label{eq:right-kernel}
    \begin{pmatrix} \| \ba_i \|^2 \| \ba_j \|^2 - \langle \ba_i, \ba_j \rangle^2 \\[0.2em] 
    -\| \ba_j \|^2 \\[0.2em]  
    - \| \ba_i \|^2 \\[0.2em] 
    2 \langle \ba_i, \ba_j \rangle  \\[0.2em] 
    1 \end{pmatrix}.
\end{equation}
\end{lemma}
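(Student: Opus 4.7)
The plan is to establish the rank equals $4$ by showing it is at most $4$ (using the quartic $q_{ij}$ that annihilates every sample point) and at least $4$ (using the Zariski-genericity assumed in \textbf{O1}). First I would identify the five row labels of the pre-transpose matrix as an ordered basis of a $5$-dimensional subspace
\[
W \,=\, \operatorname{span}\bigl\{1,\ x_1^2+y_1^2,\ x_2^2+y_2^2,\ x_1x_2+y_1y_2,\ x_1^2y_2^2+y_1^2x_2^2-2x_1y_1x_2y_2\bigr\} \subset \R[x_1,y_1,x_2,y_2],
\]
where linear independence is immediate from disjoint monomial supports. The expansion of $q_{ij}$ in \eqref{eq:qij-expand} shows that $q_{ij}\in W$ and that its coordinate vector in the above basis is precisely \eqref{eq:right-kernel}. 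Since Lemma~\ref{lem:Sij} guarantees $q_{ij}$ vanishes identically on $S_{ij}$, this vector is annihilated by every row of the transposed matrix, so it lies in the right kernel and the rank is at most $4$.

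Next I would show that the restriction $W|_{\overline{S_{ij}}}$ is exactly $4$-dimensional. By Lemma~\ref{lem:Zar-closure} and condition \textbf{A1}, the vanishing ideal of $\overline{S_{ij}}$ is the principal ideal $\langle q_{ij} \rangle$. Any element of $W \cap \langle q_{ij}\rangle$ is a multiple $q_{ij}\cdot h$ of degree at most $4$; since $q_{ij}$ already has degree $4$, $h$ must be a scalar, so $W \cap \langle q_{ij}\rangle = \R\cdot q_{ij}$. Therefore the restriction map $W\to \mathcal O(\overline{S_{ij}})$ has a one-dimensional kernel, and its image has dimension exactly $4$.

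Finally, I would invoke a standard generic-point argument to upgrade genuineness of the four-dimensional image to rank $4$ of the evaluation matrix. Fix any basis $f_1,\ldots,f_4$ of $W|_{\overline{S_{ij}}}$ and consider the polynomial $D(p_1,\ldots,p_4)=\det(f_i(p_j))_{i,j=1}^4$ on $(\overline{S_{ij}})^4$. If $D\equiv 0$, then every $4\times 4$ minor of the evaluation matrix would vanish for every tuple of points in $\overline{S_{ij}}$, contradicting that $f_1,\ldots,f_4$ are linearly independent as functions on the irreducible variety $\overline{S_{ij}}$ (irreducibility is part of Lemma~\ref{lem:Zar-closure} via irreducibility of $q_{ij}$ over $\R$). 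Hence $D$ is a nonzero regular function on $(\overline{S_{ij}})^4$, which vanishes on a proper Zariski-closed subset. The Zariski-genericity of the $m\geq 4$ samples supplied by \textbf{O1} places at least one $4$-subset outside this subset, so the rank is at least $4$ and the right kernel is exactly the line spanned by \eqref{eq:right-kernel}.

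The only nontrivial step is the genericity argument of the last paragraph; the rest is bookkeeping from \eqref{eq:qij-expand} together with the ideal-theoretic content already supplied by Lemma~\ref{lem:Zar-closure}. I do not anticipate any difficulty beyond making explicit that irreducibility of $q_{ij}$ over $\R$ implies irreducibility of $\overline{S_{ij}}$ as a real variety, which is used implicitly when passing from linear independence of functions to nonvanishing of the Vandermonde-type determinant $D$.
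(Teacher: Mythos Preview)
Your proposal is correct and follows essentially the same approach as the paper. The paper's own proof is a one-line citation---it invokes $\mathcal{I}(\overline{S_{ij}}) = \langle q_{ij}\rangle$ from Lemma~\ref{lem:Zar-closure}, the expansion \eqref{eq:qij-expand}, and then defers the generic-rank step to \cite[Thm.~3]{chen2019numerical}---whereas you have unpacked that last step explicitly via the degree bound $W\cap\langle q_{ij}\rangle=\R\cdot q_{ij}$ and the Vandermonde-type determinant argument. One minor remark: the inductive construction showing that linearly independent functions on any set admit an evaluation matrix of full rank does not actually require irreducibility of $\overline{S_{ij}}$; irreducibility is only needed (and is indeed available) to ensure that the nonvanishing locus of $D$ is Zariski-dense so that the genericity hypothesis in \textbf{O1} lands the samples there.
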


By Lemma~\ref{lem:interpolate}, we compute the triples $\{ (\| \ba_i \|^2, \| \ba_j \|^2, \langle \ba_i, \ba_j \rangle ) : i \neq j \}$ in $\mathcal{O}(p^2)$ time, by forming the matrices in Eq.~\eqref{eq:interpolation-matrix} and computing their kernels.
We then fill in the Gram matrix in Eq.~\eqref{eq:cholesky} as in the proof of Theorem~\ref{thm:unique-determine}. 
The atoms' locations $ \ba_i $ are recovered from the truncated eigendecomposition as in Eq.~\eqref{eq:truncated-eig}.  

The calculation of the weights $w_i$ is based on the following.

\begin{lemma} \label{lem:M2Sij}
Assume that conditions \textup{\textbf{A1}}-\textup{\textbf{A2}} hold. 
Then, for each $i \neq j$, the measure of $S_{ij}$ with respect to $M_2$ is
\begin{equation} \label{eq:ww}
    M_2(S_{ij}) =  w_{i} w_{j}.
\end{equation}
\end{lemma}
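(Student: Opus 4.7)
The plan is to use the pushforward expression from Lemma~\ref{lem:support-M}, namely
\[
M_2 = \sum_{i',j'=1}^{p} w_{i'} w_{j'} (\theta_{i'j'})_*(\mu),
\]
and to evaluate it on $S_{ij}$ term by term. Since $\mu$ is a probability measure and $\theta_{ij}(\SO) \subseteq S_{ij}$, the diagonal term with $(i',j') = (i,j)$ contributes exactly $w_i w_j \cdot 1 = w_i w_j$. Thus the whole lemma reduces to showing that every other term vanishes, i.e.\ that
\[
\mu\bigl(\theta_{i'j'}^{-1}(S_{ij})\bigr) \; = \; \mu\bigl(\theta_{i'j'}^{-1}(S_{ij} \cap S_{i'j'})\bigr) \; = \; 0 \qquad \text{for } (i',j') \neq (i,j).
\]

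The strategy for vanishing is a dimension count on $\SO$. First I would show $\dim(S_{ij} \cap S_{i'j'}) < \dim(S_{i'j'})$ for every pair $(i',j') \neq (i,j)$, splitting into cases. When $i' \neq j'$, Corollary~\ref{cor:irredundant} (together with conditions \textbf{A1}-\textbf{A2}) gives that $\overline{S_{ij}}$ and $\overline{S_{i'j'}}$ are distinct irreducible components of the Zariski closure of $\mathrm{supp}(M_2)$, so neither contains the other and $\overline{S_{ij}} \cap \overline{S_{i'j'}}$ is a proper Zariski-closed subset of the irreducible $3$-dimensional variety $\overline{S_{i'j'}}$, hence of dimension at most $2$. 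When $i' = j'$, the image $S_{i'i'}$ lies in the $2$-dimensional diagonal of $\R^2 \times \R^2$, and either $S_{ij}$ (for $i \neq j$) meets the diagonal in a set of dimension at most $1$, or $S_{ii}$ and $S_{i'i'}$ (with $i \neq i'$) are nested discs whose non-intersection on one side yields a proper semialgebraic subset; in both sub-cases the intersection is a proper semialgebraic subset of $S_{i'j'}$, so of strictly smaller dimension.

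Next I would pull this back through the semialgebraic map $\theta_{i'j'} : \SO \to S_{i'j'}$, using the semialgebraic fiber dimension bound
\[
\dim\bigl(\theta_{i'j'}^{-1}(V)\bigr) \; \leq \; \dim(V) + \max_{y} \dim\bigl(\theta_{i'j'}^{-1}(y)\bigr).
\]
For $i' \neq j'$ the generic fiber is $0$-dimensional (by \textbf{A1}, $R$ is determined by $(\pi R \ba_{i'}, \pi R \ba_{j'})$ up to finitely many sign choices for the missing $z$-coordinates), and the locus of exceptional fibers is itself lower-dimensional. For $i' = j'$, the fibers are $1$-dimensional copies of the stabilizer $SO(2)$ of $\ba_{i'}$. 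In every case, $\dim(\theta_{i'j'}^{-1}(S_{ij} \cap S_{i'j'})) < 3 = \dim \SO$, so this preimage is a semialgebraic subset of $\SO$ of positive codimension and therefore has Haar measure zero.

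Combining the diagonal contribution with the vanishing of all other terms gives $M_2(S_{ij}) = w_i w_j$. The main obstacle I anticipate is the bookkeeping for the fiber dimension bound in the cases where $\theta_{i'j'}$ has exceptional (higher-dimensional) fibers; in particular, one has to verify that the exceptional fiber locus does not blow up the preimage dimension beyond $2$. This is handled by noting that the exceptional locus in $\SO$ for $\theta_{i'j'}$ is itself a proper semialgebraic subset, so its contribution is already of dimension at most $2$, and can be absorbed without difficulty.
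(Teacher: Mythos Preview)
Your proposal is correct and follows essentially the same route as the paper: expand $M_2$ via the pushforward formula, observe that the $(i,j)$ term contributes $w_i w_j$ since $\theta_{ij}^{-1}(S_{ij}) = \SO$, and then show every other term vanishes by arguing that $S_{ij} \cap S_{i'j'}$ has positive codimension in $S_{i'j'}$ (via the irredundant decomposition of Corollary~\ref{cor:irredundant}) and hence its $\theta_{i'j'}$-preimage has positive codimension in $\SO$ and Haar measure zero. The paper's own proof handles the diagonal case $i'=j'$ uniformly with the off-diagonal one, since $\overline{S_{i'i'}}$ is the diagonal component $\{\bx_1 = \bx_2\}$ of the same irredundant decomposition, so your separate treatment of that case is not strictly needed but is harmless. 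You are also more explicit than the paper about the fiber-dimension bound in the pullback step (the paper simply asserts the preimage has positive codimension), so the extra bookkeeping you flag is real but, as you note, routine.
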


Therefore,  $\textbf{O2}$ tells us all off-diagonal entries of $ww^{\!\top} \! \in \R^{p \times p}$. 
We complete this uniquely to a rank-$1$ matrix by using
\begin{equation} \label{eq:w-complete}
(w w^{\!\top})_{ii} \, = \,  \frac{(w w^{\!\top})_{ij'} (w w^{\!\top})_{ji}}{(w w^{\!\top})_{jj'}},
\end{equation}
where $j, j'$ are any indices such that $i, j, j'$ are all distinct.
(This step requires $p \geq 3$.) 
The weights $w_i$ are lastly recovered either by computing the leading eigenvector/eigenvalue pair of $w w^{\top}$ or as the square root of the diagonal of $w w^{\top}$, using the fact that the $w$ are non-negative. 

\begin{remark}
We note that \eqref{eq:w-complete} is a particular case of the problem of recovering a low-rank matrix with corrupted diagonal entries; see, e.g., ~\textup{\cite{oseledets2006unifying,saunderson2012diagonal,saunderson2013diagonal}} for more on that problem. 
\end{remark}

We summarize the procedure of this section in Algorithm~\ref{alg:theoretical}.

\begin{algorithm}[tbh]
	\begin{algorithmic}
		\Require{Second population moment $M_2$ as in~\eqref{eq:moment-population-second}} 
		\Ensure{Atoms $\{ (\ba_i, w_i): i = 1, \ldots, p\}$ up to a rotation and reflection in $\R^3$}
		\begin{enumerate}
		\item Access \textbf{O1} in Assumption~\ref{assumption:oracle}
		\item Recover the unordered set $\{ (\| \ba_i\|^2, \| \ba_j \|^2, \langle \ba_i, \ba_j \rangle ) : i \neq j \}$ using \Cref{lem:interpolate}
		\item Fill in the Gram matrix $G = A^{\!\top}A$ with $A$ from \eqref{eq:def-A}
		\item Recover the $\ba_i$ up to orthogonal transformation by computing a truncated eigendecomposition of $G$ as in \eqref{eq:truncated-eig}
		\item Access \textbf{O2} in Assumption~\ref{assumption:oracle}
		\item Fill in the off-diagonal entries of $ww^{\!\top}$  using Lemma~\ref{lem:M2Sij}
		\item Complete $ww^{\!\top}$ using \eqref{eq:w-complete}
		\item Recover the $w_i$ from $ww^{\!\top}$ 
						\end{enumerate} 
		\Return $\{(\ba_i,w_i) : i = 1, \ldots, k\}$
		\caption{Recovering a sparse structure from its second moment}\label{alg:theoretical}
	\end{algorithmic}
\end{algorithm}

\begin{proof}[Proof of Theorem~\ref{thm:theoretical}]
The considerations above show that Algorithm~\ref{alg:theoretical} correctly recovers the set of atoms $\{(\ba_i, w_i) : i = 1, \ldots, p\}$  from $M_2$ (up to a  rotation and reflection in $\R^3$).
It costs $\mathcal{O}(p^2)$ in flops and storage once \textbf{O1} and \textbf{O2} are available if we use a randomized algorithm \cite{martinsson_tropp_2020} to compute the truncated decomposition in \eqref{eq:truncated-eigen}.
\end{proof}

\section{Kam's method with sparsity constraints} \label{sec:algorithm}

\begin{figure}[t]
\centering
\includegraphics[width = 0.82\textwidth]{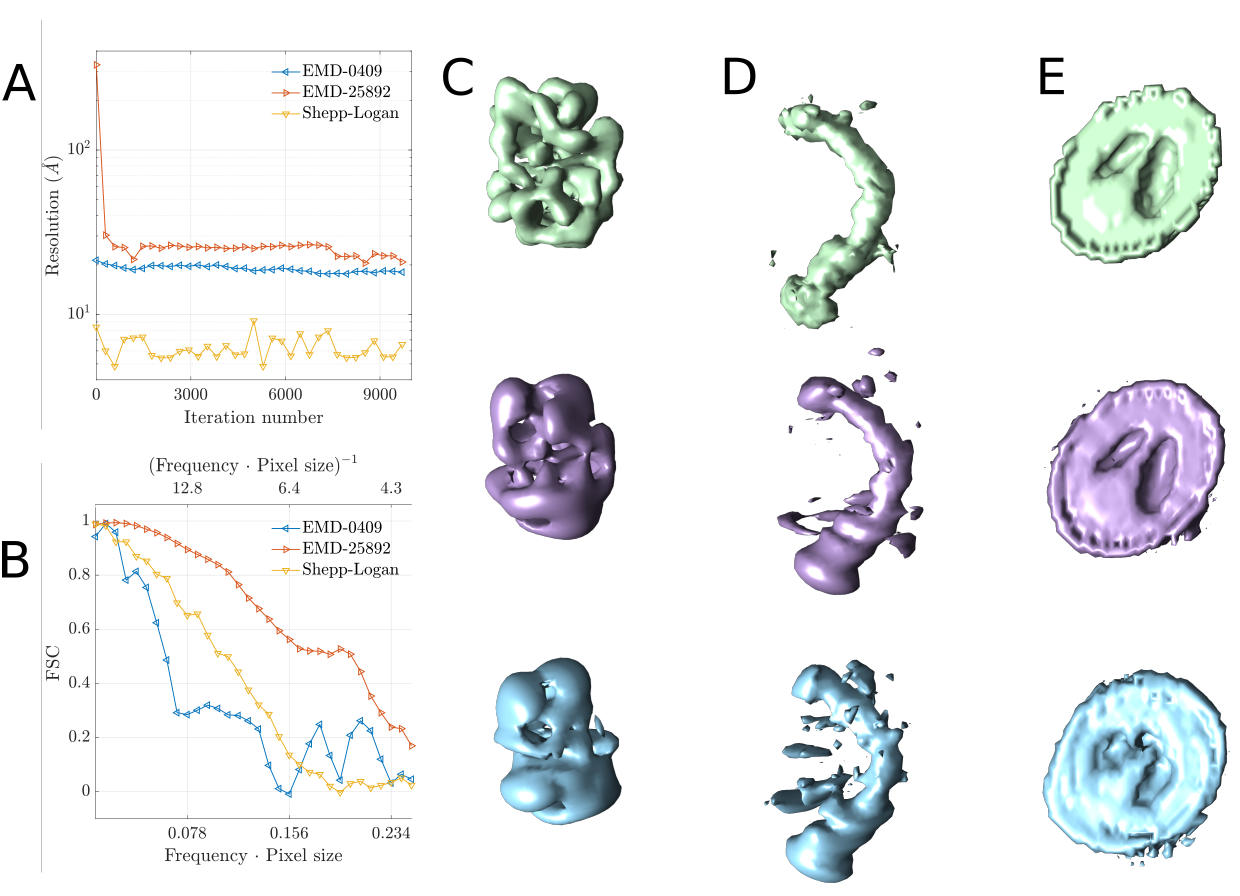}
\caption{Reconstruction results when applying the proposed algorithm to three example structures. (A) Resolution of the reconstructed volumes as a function of the number of iterations. The resolution is determined by the $0.5$ cutoff of the Fourier Shell Correlation (FSC). (B) Example FSC curves for reconstructed volumes as function of non-dimensionalized frequency. (C,D,E) Visualization of the reconstructed volumes for the three example structures: EMD-0409~\cite{herzik2019high}, EMD-25892~\cite{maker2022regulation} and the Shepp-Logan phantom~\cite{shepp1974fourier}, respectively. (Top) Ground-truth structure. (Middle) Truncation of ground-truth structure into the spherical Bessel basis using $L=8,12,12$, respectively. (Bottom) Reconstructed volume returned by Alg.~\ref{alg:alternating} using $L=8,12,12$, respectively. The visualizations were rendered by UCSF Chimera \cite{pettersen2004ucsf}.} \label{fig::ex_reconstruction}
\end{figure}

This section introduces  a computational framework to leverage sparsity in recovering the underlying molecular structure. The goal is to devise a principled way to compute ab initio approximations of the underlying structures, that can then be improved further in a refinement step (which is typically performed using expectation-maximization~\cite{scheres2012relion,punjani2017cryosparc} or used for model validation. In this section, we maintain the assumption that the distribution $\mu$ of viewing angles is uniformly distributed.
If $\mu$ is a non-uniform distribution, 
it is known that there is at most a finite list of structures that are consistent with the observed second order moment~\cite{sharon2020method}; employing sparsity to aid in the recovery problem with non-uniform distribution will be considered in future work.

We use projection-based optimization techniques from the related problem of crystallographic phase retrieval, coupled with information extracted from the second moment of the projection images. Without imposing the underlying sparsity, the second moment of the projection images determines the structure up to an ambiguity encoded by a set of unknown orthogonal matrices. The key idea of the algorithm is to  alternatingly project the molecular structure onto constraints encoded by the sparsity and by the projection image moments, respectively.

Analogously 
to~\eqref{eq:moment-population-second}, the moments of the projection images furnish information about the underlying 3-D structure. 
Unlike our theoretical results,  however, we consider a  general 3-D structure $\Phi$ expanded in a spherical Bessel basis as in Section~\ref{sec:kam}. As in the previous section, Gaussians (and mixtures of a small number of Gaussians) are often used as first approximation to the scattering potential of individual atoms. These Gaussians have the majority of their energy concentrated in a region of finite support. They can therefore be approximated by a small number of localized basis functions, such as the Haar wavelets. The sparse mixtures of Gaussians from the preceding sections can therefore be viewed as having a sparse representation in a wavelet basis, which offers computational advantages. This section therefore assumes that $\Phi$ can be represented by only a few wavelet coefficients. We also mention that recent work appearing after this paper was submitted shows that, for almost any basis, the second moment determines the structure uniquely, if the structure is sparse when expanded in the basis \cite{bendory2022sample}.
The next section introduces wavelet bases, and later we provide the details of the projection-based algorithm.

\subsection{Wavelet bases}
We  encode sparsity of a 3-D molecular structure $\Phi$ by a sparse expansion in wavelets~\cite{daubechies1992ten}---a popular choice of sparsifying, localized bases in a wide range of applications~\cite{mallat1999wavelet}. 
Our algorithm can easily be adapted
	to any specific wavelet basis, and, more generally,
	 to any choice of basis, for instance, sparsifying bases learned through data.

We denote the multilevel wavelet basis by $f_{m,n}$, where $m = 1, \ldots , m_{\text{max}}$ denotes the level of the wavelet and $n = 1, \ldots , n_{\text{max}}(m)$ the index of the function within the level.
As a shorthand, we define $W: \mathbb{R}^{M\times M \times M} \rightarrow \mathbb{R}^{m_{\text{max}}n_{\text{max}}(m)}$
as the map sending a 3-D structure to its vector of coefficients when expanded in the wavelet basis, i.e.,
\begin{equation}
W(\Phi) = \left( \langle \Phi, f_{m,n} \rangle \right)_{m,n = 1}^{m_{\text{max}}, n_{\text{max}}(m)}.
\end{equation}
Likewise, $W^{-1}: \mathbb{R}^{m_{\text{max}}n_{\text{max}}(m)} \rightarrow \mathbb{R}^{M\times M \times M}$ then maps a wavelet coefficient vector into its 3-D expansion by
\begin{equation}
W^{-1}\left( \left( c_{m,n} \right)_{m,n = 1}^{m_{\text{max}}, n_{\text{max}}(m)} \right) = \sum_{m,n} c_{m,n} f_{m,n}.
\end{equation}

An additional advantage of using wavelet bases is that $W$ and $W^{-1}$ can then be applied in linear time $\mathcal{O}(M^3)$ using a fast wavelet transform~\cite{mallat1989theory}.

\subsection{Projection-based algorithm}

For a discretized 3-D structure~$\Phi$ of size $M\times M \times M$, we define the mapping $\mathcal{SB}: \mathbb{R}^{M\times M \times M} \rightarrow \prod_{\ell=0}^L\mathbb{C}^{S_\ell \times (2\ell+1)}$ of the structure into its coordinates in the spherical Bessel basis by
\begin{align}
\mathcal{SB}(\Phi) = \left(A_0, A_1, \ldots , A_L\right).
\end{align}
The inverse mapping $\mathcal{SB}^{-1}:  \prod_{\ell=0}^L\mathbb{C}^{S_\ell \times (2\ell+1)} \rightarrow  \mathbb{R}^{M\times M \times M}$ then expands a set of coefficients in the spherical Bessel basis into its corresponding 3-D structure:    
\begin{align}
\mathcal{SB}^{-1} \left(A_0, \ldots , A_L\right) =  \mathcal{F}^{-1} \left( \sum_{\ell , m, s} \! a_{\ell m s}j_{\ell s}(k) Y^m_\ell(\theta, \varphi) \! \right).
\end{align}

As discussed in Section~\ref{sec:preliminary_definitions}, Kam's method identifies matrices $A_\ell O_\ell$, for $O_\ell$ an unknown orthogonal matrix, for each $\ell$ with $S_\ell \geq 2\ell + 1$. By possibly reducing the value of $L$ to the largest index with this property, we will for ease of notation assume that this property holds for $\ell = 0, \ldots , L$.
Therefore, at the onset of the algorithm, we have access to a set of  coefficient matrices $B = (B_0, \ldots , B_{L})$ satisfying
\begin{equation}\label{eq:def_ML}
B_\ell = A_\ell O_\ell, \quad O_\ell \in O(2\ell+1),
\end{equation}
for unknown orthogonal matrices $O_\ell$.  Our algorithm aims to recover an approximation of these unknown orthogonal matrices, which leads to an approximation of $\Phi$. This orthogonal matrix retrieval problem is an analogue to the problem of the missing phases in the phase retrieval problem~\cite{bendory2022algebraic}. We therefore adapt a popular algorithm from the phase retrieval literature into the problem of cryo-EM. The algorithm repeatedly utilizes two projections  onto the set of structures with a given sparsity level and a set determined by the projection images. These two projections are the main pillars of the algorithm and can be used in different ways, as explained next. 
But first, we  define the two projection operators. 

\subsubsection{First projection: Moment constraint.}\label{sec:second_proj}
We begin by defining  the first projection operator, denoted by $\rho_1$, as the projection onto the set defined by the $C_\ell$ matrices in \eqref{eq:defC}. Let $\mathcal{SB}(\Phi) = (A_0, \ldots , A_{L})$ in $\prod_{\ell=0}^{L}\mathbb{C}^{S_\ell \times (2\ell+1)}$ be the ordered collection of matrices of coefficients in the spherical Bessel basis. Define $\rho_1(\Phi)$ as the projection
\begin{align}\label{eq::second_proj}
\rho_1(\Phi) = \mathcal{SB}^{-1}\left(D_0, \ldots , D_{L}\right),
\end{align}
where the matrices $D_\ell$ are defined by 
\begin{align}\label{eq::procrustes}
\!\!(D_0,\ldots, D_{L}) \!=  \!\!\argmin_{(D_0,\ldots, D_{L})} \!\!\!\! \left\{ \| A_\ell \! -\! D_\ell\|_{\text F} : D_\ell D_\ell^* = C_\ell \right\},
\end{align}
with $C_\ell$ from \eqref{eq:defC}. \eqref{eq::procrustes} is an instance of the Orthogonal Procrustes problem. Although it is a non-convex optimization problem, it can be solved in closed form in terms of the singular value decomposition of $B_\ell^TA_\ell$, see e.g., \cite{schonemann1966generalized}. In the implementation, the matrices defining the operations $\mathcal{SB}$ and $\mathcal{SB}^{-1}$ are precomputed. The computational complexity of subsequently solving an instance of \eqref{eq::second_proj} is then $\mathcal{O}(L^4 + \sum_{l=0}^L S_\ell \ell^2 + M^3\log{M} + M^3\sum_{\ell}\ell S_\ell )  = \mathcal{O}(L^4 + M^3\log{M} + M^3L^3 )$, since typically $S_\ell = \mathcal{O}(L)$.

\subsubsection{Second projection: Sparsity constraint.}\label{sec:first_proj}
The projection $\rho_2$ promotes sparsity in a given local wavelet basis. For a structure~$\Phi$ and an integer~$K$, define $\rho_2(\Phi,K)$ as the structure with wavelet coefficients obtained by retaining the $K$ largest components of $W(\Phi)$ and replacing the remaining elements by zero, i.e.,
\begin{equation}\label{eq::first_proj}
    \rho_2(\Phi,K) = W^{-1}\left(\left( \alpha_{m,n}c_{m,n} \right)_{m,n = 1}^{m_{\text{max}}, n_{\text{max}}(m)}\right),
\end{equation}
where the coefficients are defined by $ W(\Phi) = \left( c_{m,n} \right)_{m,n = 1}^{m_{\text{max}}, n_{\text{max}}(m)}$ and $\alpha_{m,n} =1 $ if $c_{m,n}$ has magnitude among the $K$ largest magnitudes of the $c_{m,n}$, and zero otherwise.
The computational complexity of this step is $\mathcal{O}(M^3)$. We again emphasize that, generally, any localized basis or frame can be used to define $\rho_2$, and we fix a wavelet basis for the sake of definiteness.

\subsubsection{Algorithm.}
A straightforward algorithm to attempt to recover $\Phi$ is through alternating projections. This procedure is described by fixing a sparsity level $K$ and iterating the two projections $\rho_1$ and $\rho_2$ in turn. The use of the two projections in an alternating fashion is intended to promote convergence to an intersection point of the two sets. In the case of projecting onto convex sets, convergence results are known~\cite{cheney1959proximity}, but convergence is not guaranteed for the non-convex projections in \eqref{eq::second_proj} and \eqref{eq::first_proj}. Indeed, for non-convex sets, alternating projection schemes frequently suffer from convergence to local minima, and a method to escape the local minima is required. To achieve this, the phase retrieval literature details different iteration schemes combining the two projections $\rho_1$ and $\rho_2$ in different ways, for instance using the Relaxed-Reflect-Reflect (RRR) algorithm \cite{elser2017complexity,elser2018benchmark}. In terms of the projection operators, this iterative scheme can be written out as
\begin{align}
    \Phi^{(n+1/3)} &= \rho_1\left( \Phi^{(n)}\right), \nonumber\\
    \Phi^{(n+2/3)} &= \rho_2 \left(2  \Phi^{(n+1/3)} -  \Phi^{(n)},K \right), \\
    \Phi^{(n+1)} &=  \Phi^{(n)} + \beta \left( \Phi^{(n+2/3)} - \Phi^{(n+1/3)} \right), \nonumber
\end{align}
where $\beta \in (0,2)$ is a scalar hyperparameter. The  algorithm is summarized in Alg.~\ref{alg:alternating}.
As aforementioned, other phase retrieval algorithms which are based on two projection operators, such as the  difference map algorithm and the relaxed averaged alternating reflections algorithm, can be adapted to cryo-EM in the same fashion. 

\begin{algorithm}[tbh]
	\begin{algorithmic}
		\Require{Projection images, sparsity level $K$, maximum number of iterations $N$, hyperparameter $\beta$.}
		\Ensure{Estimated structure $\Phi$}
		\begin{enumerate}
		\item Form the matrices $C_\ell$ from \eqref{eq:defC}
		\item Compute the Cholesky factorizations of the $C_\ell$ to produce matrices $B_\ell$ in \eqref{eq:def_ML}
		\item $\Phi^{(0)} =  \mathcal{SB}^{-1}\left( B_0, \ldots , B_{L} \right) $
		\item For $n=0, \ldots , N-1$ do  
		\begin{itemize}
			\item  $\Phi^{(n+1/3)} = \rho_1\left( \Phi^{(n)}\right)$
		\item $\Phi^{(n+2/3)} = \rho_2 \left(2  \Phi^{(n+1/3)} -  \Phi^{(n)},K \right)$
		\item $\Phi^{(n+1)} =  \Phi^{(n)} + \beta  \left( \Phi^{(n+2/3)} - \Phi^{(n+1/3)} \right)$
		\end{itemize}
		
	   \end{enumerate}	
		\Return $\Phi^{(N-2/3)}$
		\caption{Recovering $\Phi$}\label{alg:alternating}
	\end{algorithmic}
\end{algorithm}

\begin{figure}[t]
\centering
\includegraphics[width = 0.8\textwidth]{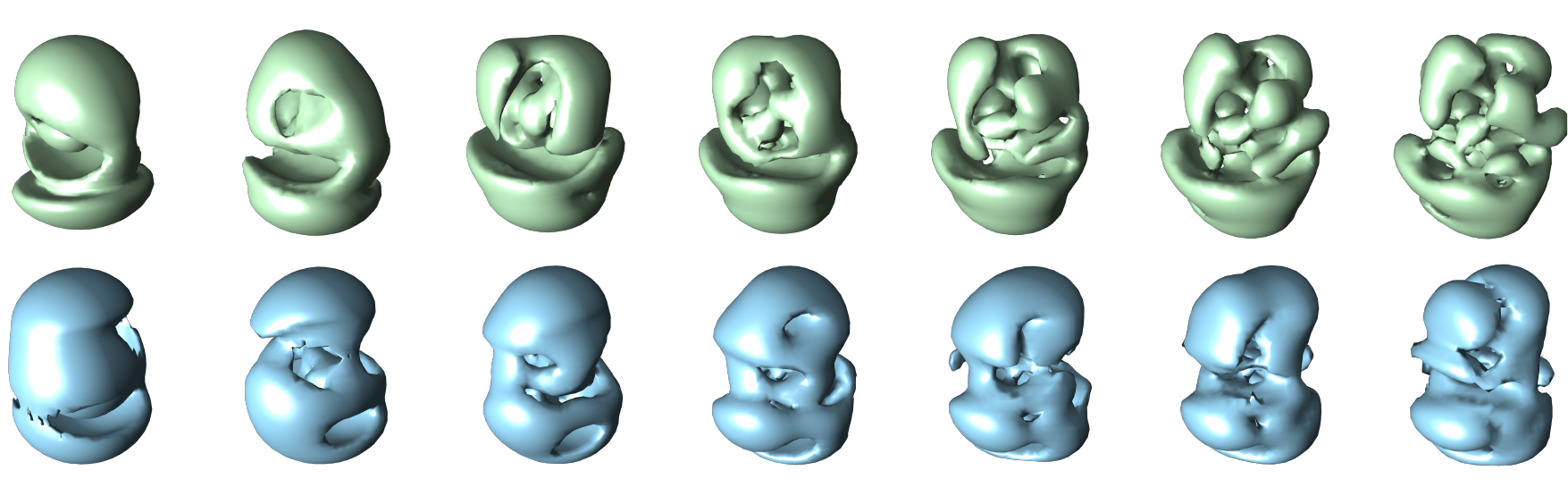}
\caption{(Bottom) Reconstruction results for EMD-0409 truncated with truncation parameter $L$ varying from $2$ (leftmost) to $8$ (rightmost). (Top) Ground truth structure truncated with truncation parameter $L$ varying from $2$ to $8$.}\label{fig:truncated_L}
\end{figure}

\subsection{Simulation results}
We apply Algorithm~\ref{alg:alternating} to two structures from the online EM data bank~\cite{lawson2016emdatabank}, EMD-0409~\cite{herzik2019high} and EMD-25892~\cite{maker2022regulation}, as well as the Shepp-Logan phantom~\cite{shepp1974fourier}. For each structure, we run Algorithm~\ref{alg:alternating} for a given value of $L$ and $K$ to obtain a reconstruction. 
To measure the reconstruction quality, we follow the standard procedure in the cryo-EM community,  and compute the Fourier shell correlation (FSC) between the estimated structure and the ground truth. Specifically, the FSC of two structures $\Phi_1$ and $\Phi_2$ is defined by
\begin{equation}
\text{FSC}(k) =  \frac{\sum_{r_i: \|r_i\| = k} \mathcal{F}\left(\Phi_1\right)(r_i) \overline{\mathcal{F}\left(\Phi_2\right)(r_i)}}{\sqrt{
\sum_{\|r_i\| = k} |\mathcal{F}\left(\Phi_1\right)(r_i)|^2
\sum_{\|r_i\| = k} |\mathcal{F}\left(\Phi_2\right)(r_i)|^2}}.
\end{equation}
where  one structure is the estimated structure, the second is the ground truth, and $\mathcal{F}$ denotes Fourier transform. The FSC is real-valued because of symmetry of the summation. The resolution is determined when the FSC curve drops below $0.5$.

EMD-0409 has dimensions $128\times 128 \times 128$, with each voxel having physical length of $1.117$ Å. EMD-25892 has dimensions $320\times 320 \times 320$, and voxel size $1.68$ Å. The volumes were downsampled by a factor of 2 and 5, respectively, to give structures of size $64\times 64 \times 64$. The ground truth matrices were generated exactly and the matrices $B_\ell$ in \eqref{eq:def_ML} were generated using $O_\ell$ chosen uniformly at random. To fix the units for the Shepp-Logan phantom, we assume the voxels to have side length 1 Å. The simulations used Haar wavelets to define $W$. The simulations set the values of the hyperparameters to $\beta = 0.5$ and $K=5000,4000,4000$, for EMD-0409, EMD-25892 and the Shepp-Logan phantom, respectively.

One iteration in Step 4 of Algorithm~\ref{alg:alternating} took around 4.6 seconds on a 2017 MacBook
Pro with a 3.1 GHz Intel Core i5 processor and 16 GB of memory. 10000 iterations therefore take around 13 hours.

The result of applying Algorithm~\ref{alg:alternating} to each structure is shown in Figure~\ref{fig::ex_reconstruction}. For all three example structures, during a run of the algorithm, the resolution initially rapidly improves. Afterwards, the improvement slows down and exhibits an exploratory and oscillating behavior.
This is typical for RRR-type algorithms, which frequently exhibit a rapid improvement in the early stages of the algorithm, followed by a long exploratory phase, where no improvement is made in the cost function, and then eventually followed by a final phase of rapid improvement. See Figure~5 in \cite{elser2018benchmark} for an example in the context of phase retrieval. The results of Algorithm~\ref{alg:alternating} exhibit the rapid initial improvement, but seem to not finish the exploration phase within the considered number of iterations. However, rather than expending more calculation time, Figure~\ref{fig::ex_reconstruction} shows that Algorithm~\ref{alg:alternating} obtains a reasonable ab initio model within roughly 1000 iterations, which can then be refined using other software packages like RELION or cryoSPARC \cite{sigworth1998maximum,scheres2012relion,punjani2017cryosparc}.
 
One could expect the obtained resolution to partly be limited by the sparsity constraint, since sparsity truncation will remove the finer details, i.e., high frequency information, although the algorithm compensates for this by projecting back onto the correct correlation. A few variations of Algorithm~2 could be considered, inspired by different alternatives to RRR in the phase retrieval literature (e.g.,~\cite{luke2004relaxed}), and one could alternatively allow for values of $K$ that increase with the iteration number in order to gradually increase the resolution.

The FSC curves in Figure~\ref{fig::ex_reconstruction} differ qualitatively from those obtained by other techniques, with values that typically approximately equal 1 at low resolutions and then fall off sharply at medium frequencies. This behavior is expected whenever the image rotations are accurately estimated. However, we operate at a much lower SNR for which rotations cannot be accurately assigned, leading to the different appearance of the FSC curves.

During the run of the algorithm, the optimal resolutions obtained for the three structures were $17.2$ Å, $20.52$ Å, and $4.59$ Å, respectively, and the resolutions at the initialization of the algorithm were $21.3$ Å, $329.8$ Å, and $8.4$ Å, respectively. As a comparison, the resolutions between the ground truth structures and their truncation into the spherical Bessel bases with the chosen values of $L$ are $6.0$ Å, $18.50$ Å and $2.20$ Å, respectively; these resolutions are bounds on the optimally obtainable resolutions. 

Figure~\ref{fig:truncated_L} also shows a comparison of EMD-0409 with its reconstruction, truncated to different values of $L$. Visually, the reconstructed element captures the relevant features of the ground truth although limited by the resolution expected from Figure~\ref{fig::ex_reconstruction}, for each value of $L$, and the resolution increases with $L$.

Movie~1 visualizes the reconstructed volume as a function of the iteration number. Note that the reconstruction at each step is visually similar to the ground truth, although the computed resolution noticeably improves during the run of the algorithm. This implies that even knowledge of the coefficients $A_\ell O_\ell$ with the wrong rotation matrices $O_\ell$ provides some information about the ground truth.

\begin{figure}[t]
\centering
\includegraphics[width = 0.8\textwidth]{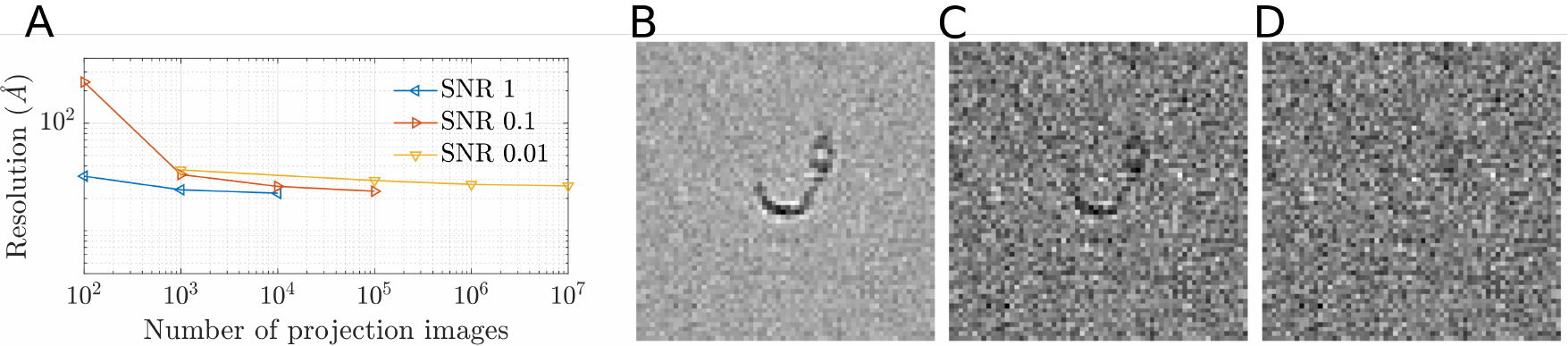}
\caption{(A) Resolution of reconstructed volume using second moment estimated from noisy projection images. (B--D) Sample CTF-affected and noisy projection images for SNR $1$, $0.1$ and $0.01$, respectively.}\label{fig:noisy_fig}
\end{figure}

We additionally show the result of running Algorithm~\ref{alg:alternating} on EMD-25892 with moments estimated from noisy projection images, which are also affected by contrast transfer functions (CTFs). We generate projection images according to the model
\begin{equation} \label{eq:image-ctf}
    I_{R_i}(x,y) = h_i(x,y) * \int_{z=-\infty}^{\infty} (R_i \cdot \Phi)(x,y,z) dz \, + \, \varepsilon(x,y),
\end{equation}
where $h_i$ is a point-spread function. The images were generated using signal-to-noise ratios 1,0.1 and 0.01, with the number of images used ranging between $10^2$ and $10^7$. All projection images used voltage $300$ kV and spherical aberration $2$ mm. The defocus values ranged between 1 $\mu$m and 4 $\mu$m.  When using at most $10^5$ projection images, the images used distinct defocus values. For $10^6$ and $10^7$ images, we divided the images into $10^4$ and $10^3$ defocus groups, respectively.

We estimate the second moment using the approach in \cite{marshall2022fast}. The result is shown in Fig.~\ref{fig:noisy_fig}, with accuracy comparable to that of Fig.~\ref{fig::ex_reconstruction} when using $\omega(\sigma^4)$ projection images. The simulations used $\beta = 0.5$ and $K=5000$.

\section{Discussion} \label{sec:discussion}
The contribution of this paper is twofold. As the first contribution, our theoretical results imply that a sparse mixture of  point masses can be uniquely recovered from the second order moment, even in the case of a uniform distribution of viewing angles, whereas previous work has only proven recovery using the third order moment. 
Thus, fewer images are required for reconstruction. This has a number of potential experimental implications. Firstly, since microscope time is expensive, this may greatly reduce the cost of the experimental part of the cryo-EM pipeline. 
This might be especially important for XFEL, where  throughput is a major bottleneck and viewing directions are more likely to be uniformly distributed~\cite{spence2017xfels,kirkwood2022multi}.
Secondly, it may enable reconstruction of structures where a limited number of projection images can be captured.
This might be the case, for example,  when the molecule may appear in several conformational states, and a limited number of images will be available for each  conformation.
 
The second contribution is a new algorithm for ab initio modeling which can be used as a starting point for iterative refinement procedures and additionally provides another way to validate reconstruction results obtained by different computational techniques. The computational framework introduced in this article  opens the door to incorporating a number of promising techniques from crystallographic phase retrieval into cryo-EM algorithms. There is, for instance, flexibility in choosing the projection operators $\rho_1$ and $\rho_2$.
It may include biologically-oriented priors, such as minimum atom-atom distance or Wilson statistics~\cite{singer2021wilson,gilles2022molecular}, or data-driven priors based on previously resolved structures~\cite{kimanius2021exploiting}.  A systematic study of adapting these techniques will be initiated in coming work. Additional future work includes extending the use of sparsifying priors in other parts of the cryo-EM reconstruction pipeline, for instance in existing approaches to iterative refinement~\cite{scheres2012relion} or in autocorrelation analysis using micrographs without particle picking~\cite{bendory2018toward,lan2020multi,bendory2021multi}. However, we do not expect sparsity to have as dramatic an impact on the sample complexity in the case of reconstruction directly from micrographs without particle picking. When expanding $L\times L$ projection images in a steerable basis such as the Fourier-Bessel basis \cite{marshall2022fast}, the second order moment of picked particles has $\mathcal{O}(L^3)$ independent entries. This is comparable to the number of parameters required to describe the 3-D structure. Still, in the case of uniform distribution of viewing directions Kam \cite{kam1980reconstruction} showed that the second order moment is insufficient for 3-D structure recovery, but this paper shows that additional sparsity assumptions ensure unique recovery of a 3-D structure from the second order moment. For autocorrelation analysis of entire micrographs, the second order moment is a 1-D profile equivalent to a rotationally-invariant power spectrum. Therefore, the number of entries is clearly insufficient for 3-D reconstruction and information from the third order moment needs to be incorporated as well. The sparsity constraint may potentially improve the quality of recovery, but the sample complexity is asymptotically the same, proportional to $\sigma^6$.
  
  Yet another important direction is to incorporate the sparsity prior into reconstruction by the method of moments when there is a non-uniform distribution of viewing directions \cite{sharon2020method}.

Molecular reconstruction using the method of moments fills an important niche in single-particle reconstruction. Existing software packages like RELION \cite{scheres2012relion}, cryoSPARC \cite{punjani2017cryosparc} etc. encounter difficulties when reconstructing small molecules (e.g., below 40 kDa) even though particle picking is not prohibitively difficult at this size \cite[Fig.~10f,g,h]{vinothkumar2016single}. The methods of this paper are therefore viable in situations where other techniques are not expected to be. However, we emphasize that we do not suggest the method of moments to be competitive to RELION or cryoSPARC in terms of resolution for large molecules except for the purpose of validation or fast ab-initio modeling technique. Moreover, we also expect that incorporating sparsity priors would improve the sample complexity and quality of reconstruction algorithms employed by existing software packages like RELION and cryoSPARC. A full demonstration would be an important direction for future work.

\appendix

\section{Proofs of auxiliary results for Theorems~\ref{thm:unique-determine} and~\ref{thm:theoretical}}
\subsection{Proof of Lemma~\ref{lem:Sij}}
Note that $S_{ij}$ is connected and compact, since $S_{ij} = \theta_{ij}(\SO)$ and $\SO$ is compact and connected, while $\theta_{ij}$ is continuous.
It also is semialgebraic, as $\SO$ is a real algebraic variety and $\theta_{ij}$ is a polynomial map (see the Tarski-Seidenberg theorem \cite{bochnak2013real}). 

Define $T_{ij} \subseteq \R^2 \times \R^2$ as the set cut out by the three constraints in \eqref{eq:Sij}.  
Assume $((x_1,y_1),(x_2,y_2)) \in S_{ij}$. 
By definition of $S_{ij}$, there exist $R \in \SO$ and $z_1, z_2 \in \R$ such that $R \ba_i = (x_1, y_1, z_1)^{\!\top}$ and $R \ba_j = (x_2, y_2, z_2)^{\!\top}$.  
Then
\begin{equation*}
    \| \ba_i \|^2 - x_1^2 - y_1^2 = \| R \ba_i \|^2 - x_1^2 - y_1^2 = z_1^2,
\end{equation*}
likewise 
$
\| \ba_j \|^2 - x_2^2 - y_2^2 = z_2^2,
$
and 
\begin{align*}
\left( \langle \ba_i, \ba_j \rangle \! - x_1 x_2 \! - y_1 y_2 \right)^2 &= \left( \langle R \ba_i, R \ba_j \rangle \! - x_1 x_2 \! - y_1 y_2 \right)^2 \\ &= (z_1z_2)^2.
\end{align*}
Also, $x_1^2 + y_1^2 \leq \| R \ba_i \|^2 = \| \ba_i \|^2$, and similarly $x_2^2 + y_2^2 \leq \| \ba_j^2 \|^2$.
These show that $\left( (x_1, y_1), (x_2, y_2) \right) \in T_{ij}$, whence $S_{ij} \subseteq T_{ij}$.

For the converse, take $\left( (x_1, y_1), (x_2, y_2) \right) \in T_{ij}$.  
Let 
\begin{equation*}
    z_1 = \sqrt{\| \ba_i \|^2 - x_1^2 - y_1^2} \quad\text{and}\quad z_2 = \varepsilon \sqrt{ \| \ba_j \|^2 - x_2^2 - y_2^2},
\end{equation*}
where 
$\varepsilon = \operatorname{sign}\left( \langle \ba_i, \ba_j \rangle - x_1 x_2 - y_1 y_2 \right)$.  
Put $\mathbf{b}_i = (x_1, y_1, z_1)^{\!\top}$ and $\mathbf{b}_j = (x_2, y_2, z_2)^{\!\top}$ in $\R^3$.  
By the choice of $z_1$ and $z_2$, 
\begin{equation} \label{eq:norms}
\| \mathbf{b}_i \| = \| \ba_i \| \quad \text{and} \quad \| \mathbf{b}_j \| = \| \ba_j \|.
\end{equation}
Also, from the equality constraint in \eqref{eq:Sij}, it holds
$z_1^2 z_2^2 = \left( \langle \ba_i, \ba_j \rangle - x_1x_2 - y_1y_2 \right)^2 $.  This with the choice of $\varepsilon$ implies 
\begin{equation} \label{eq:angle}
\langle \mathbf{b}_i, \mathbf{b}_j \rangle = \langle \ba_i, \ba_j \rangle.
\end{equation}
From \eqref{eq:norms} and (\ref{eq:angle}), there exists $R \in \SO$ such $\mathbf{b}_i = R \ba_i$ and $\mathbf{b}_j = R \ba_j$.  Hence $\left( (x_1, y_1), (x_2, y_2) \right) \in S_{ij}$, whence $T_{ij} \subseteq S_{ij}$.  
We conclude  $T_{ij} = S_{ij}$.

The dimension of $S_{ij}$ as a semialgebraic set is the maximal dimension of a cell in any cylindrical algebraic decomposition of it \cite[Cor.~2.8.9]{bochnak2013real}.
This agrees with the maximal rank attained by the differential of $\theta_{ij}$:  
\begin{equation*}
\dim(S_{ij}) = \max_R  \operatorname{rank} (D \theta_{ij} : T_R(\SO) \rightarrow T_{\theta_{ij}(R)}(\R^2 \times \R^2)),
\end{equation*}
where $T$ denotes tangent space.  We recall that the tangent space to rotation matrices is parameterized by skew-symmetric matrices.
Specifically,
$T_R(\SO) = \{  [s]_{\times}R : s \in \R^3\}$, where 
\begin{equation*}
[s]_{\times} := \begin{pmatrix} 0 & s_3 & -s_2 \\ -s_3 & 0 & s_1 \\ s_2 & -s_1 & 0 \end{pmatrix}.
\end{equation*}
Then, $D\theta_{ij}( [s]_{\times}R) = \left(\pi[s]_{\times} R \ba_i, \pi[s]_{\times} R \ba_j \right) \in \R^2 \times \R^2 = T_{\theta_{ij}(R)}(\R^2 \times \R^2)$. 
Putting $(x_1, y_1, z_1)^{\!\top} := R \ba_i$ and $(x_2, y_2, z_2)^{\top} := R \ba_j$, we rewrite
\begin{equation*} \label{eq:43-matrix}
  D \theta_{ij}([s]_{\times} R) = W(R) s, \quad \quad \text{where}~W(R) :=  \begin{pmatrix} 0 & -z_1 & y_1 \\
    z_1 & 0 & -x_1 \\
    0 & -z_2 & y_2 \\
    z_2 & 0 & -x_2
    \end{pmatrix}.
\end{equation*}
Thus, $\dim(S_{ij}) = 3$, unless $W(R)$ is rank-deficient for all $R \in \SO$.
We claim it is rank-deficient for specific $R$ if and only if $(x_1, y_1, z_1)$ and $(x_2, y_2, z_2)$ are linearly dependent or $z_1 = z_2 = 0$.  This is proven using a computer algebra system, e.g. \cite{M2}.  
Indeed, if $I$ is ideal in the ring $\mathbb{Q}[x_1,y_1,z_1,x_2,y_2,z_2]$ generated by the $3 \times 3$ minors of $W(R)$, the claim follows from calculating the primary decomposition \cite{eisenbud2013commutative}:
\begin{equation*}
    I = \langle z_1 y_2 - y_1 z_2, 
    z_1 x_2 - x_1 z_2,  
    y_1 x_2 - x_1 y_2
    \rangle \cap 
    \langle z_2^2, z_1 z_2, z_1^2 \rangle.
\end{equation*}
Given the claim, $\operatorname{rank}(W(R)) < 3$ for all $R \in \SO$ if and only if $\ba_i$ and $\ba_j$ are linearly dependent.
In other words, $S_{ij}$ has dimension $3$ if and only if $\ba_i$ and $\ba_j$ are linearly independent.  
The proof of
 Lemma~\ref{lem:Sij} is complete. \hfill $\qed$

\subsection{Proof of Lemma~\ref{lem:support-M}}
As mentioned in the main text, this is immediate from Definitions~\ref{def:theta} and \ref{def:Sij}  and~\eqref{eq:moment-population-second}. \hfill $\qed$

\subsection{Proof of Lemma~\ref{lem:Zar-closure}}

If $X$ is a subset of a real Euclidean space $\R^k$, we write $\overline{X}$ for the Zariski closure in $\R^k$, and  $\mathcal{I}(\overline{X})$ for the real radical ideal of $\overline{X}$.

First, note that $\overline{S_{ij}}$ is irreducible.
This is because $S_{ij} = \theta_{ij}(\SO)$, $\theta_{ij}$ is polynomial and $\SO$ is an irreducible algebraic variety.  
So, $\mathcal{I}(\overline{S_{ij}})$ is prime~\cite[Thm.~2.8.3(ii)]{bochnak2013real}.
Also, $\overline{S}_{ij}$ has dimension $3$ as an algebraic variety.  
This is by \cite[Prop.~2.8.2]{bochnak2013real}, and Lemma~\ref{lem:Sij} which states $S_{ij}$ has dimension $3$ as a semialgebraic set.   
So, $\mathcal{I}(\overline{S_{ij}})$ has height $1$  \cite[Def.~2.8.1]{bochnak2013real}.  
Here, every prime ideal with height $1$ is principal, as $\mathbb{R}[x_1, y_1, x_2, y_2]$ is a unique factorization domain.  
It follows that
\begin{equation} \label{eq:f}
\mathcal{I}(\overline{S_{ij}}) = \langle f \rangle,
\end{equation}
for some irreducible polynomial $f \in \mathbb{R}[x_1, y_1, x_2, y_2]$, where angle brackets indicate ideal generation.

 By Lemma~\ref{lem:Sij}, we know $S_{ij} \subseteq \mathcal{Z}(q_{ij})$, where $\mathcal{Z}$ denotes the zero set in $\R^2 \times \R^2$.
Taking closures,  $\overline{S_{ij}} \subseteq \mathcal{Z}(q_{ij})$. 
Equivalently, 
$q_{ij} \in \mathcal{I}(\overline{S_{ij}})$.
By \eqref{eq:f}, this means $f$ evenly divides $q_{ij}$, say, 
\begin{equation} \label{eq:factorization}
q_{ij} = fg,
\end{equation}
for some $g \in \R[x_1, y_1, x_2, y_2]$.
To conclude the proof, it suffices to prove that $g$ is a nonzero scalar.
Then $\langle q_{ij} \rangle = \langle f \rangle = \mathcal{I}(\overline{S_{ij}})$, and $q_{ij}$ is irreducible because $f$ is.

For a contradiction,  assume that $f$ has positive degree.  
Then, \eqref{eq:factorization} implies
\begin{equation} \label{eq:top}
    (q_{ij})_{\text{top}} = f_{\text{top}} g_{\text{top}},
\end{equation}
where the subscript indicates the top total degree part of the polynomial.  Here,
\begin{align} \label{eq:qij-expandSI}
 q_{ij} = & \left( \| \ba_i \|^2 \| \ba_j \|^2 - \langle \ba_i, \ba_j \rangle^2 \right) - \| \ba_j \|^2 x_1^2 - \| \ba_j \|^2 y_1^2  - \| \ba_i \|^2 x_2^2 - \| \ba_i \|^2 y_2^2 \nonumber \\[0.1em] & + 2 \langle \ba_i, \ba_j \rangle x_1 x_2  + 2 \langle \ba_i, \ba_j \rangle y_1 y_2  + x_1^2 y_2^2 + y_1^2 x_2^2  - 2 x_1 y_1 x_2 y_2.
\end{align}
Thus,
\begin{equation*}
    (q_{ij})_{\text{top}} = x_1^2y_2^2 + y_1^2 x_2^2 - 2 x_1 y_1 x_2 y_2 = (x_1 y_2 - y_1 x_2)^2.
\end{equation*}
From \eqref{eq:top}, the assumption that $f$ has positive degree and unique factorization, we deduce that (possibly after multiplying by nonzero scalars) 
\begin{equation*}
f_{\text{top}} = g_{\text{top}} = x_1 y_2 - y_1 x_2.
\end{equation*}
Therefore,
\begin{align} \label{eq:gf-expand}
    f = x_1 y_2 - x_2 y_1 + \alpha x_1 + \beta y_1 + \gamma x_2 + \delta y_2 + \varepsilon, \nonumber \\
    g = x_1 y_2 - x_2 y_1 + \zeta x_1 + \eta y_1 + \theta x_2 + \iota y_2 + \kappa,
\end{align}
for some $\alpha, \beta, \gamma, \delta, \varepsilon, \zeta, \eta, \theta, \iota, \kappa \in \R$.
Now we insert \eqref{eq:gf-expand} and \eqref{eq:qij-expandSI} into \eqref{eq:factorization}.
Equating the constants and the coefficients of $x_1^2, y_1^2, x_2^2, y_2^2$ gives
\begin{equation} \label{eq:first-equate}
    \begin{cases}
    \| \ba_i \|^2 \| \ba_j \|^2 - \langle \ba_i, \ba_j \rangle^2 = \varepsilon \kappa \\
    - \| \ba_j \|^2 = \alpha \zeta \\
    - \| \ba_j \|^2 = \beta \eta \\
    - \| \ba_i \|^2 = \gamma \theta \\
  - \| \ba_i \|^2 = \delta \iota.
    \end{cases}
\end{equation}
All the left-hand sides in \eqref{eq:first-equate} are nonzero because $\ba_i$ and $\ba_j$ are linearly independent.  
Therefore, $\alpha, \ldots, \kappa$ are all nonzero.
Next, we equate the coefficients of $x_1, y_1, x_2, y_2$ in \eqref{eq:factorization}.  The result is that
\begin{equation*}
    \begin{pmatrix} 
    \alpha & \zeta \\
    \beta & \eta \\ 
    \gamma & \theta \\
    \delta & \iota 
    \end{pmatrix} \!\! \begin{pmatrix} 
    \kappa \\
    \varepsilon
    \end{pmatrix} = 0, 
\end{equation*}
whence the $4 \times 2$ matrix has rank $1$.  So all its $2 \times 2$ minors vanish, in particular
\begin{equation} \label{eq:diff}
    \alpha \eta - \beta \zeta = 0.
\end{equation}
Finally, we equate the coefficients of $x_1y_1$ in \eqref{eq:factorization}:  
\begin{equation} \label{eq:sum}
    0 = \alpha \eta + \beta \zeta. 
\end{equation}
\eqref{eq:diff} and (\ref{eq:sum}) imply
\begin{equation}
    \alpha \eta = \beta \zeta = 0.
\end{equation}
But this contradicts the earlier finding that $\alpha, \ldots, \kappa$ are all nonzero.  So the assumption that $f$ has positive degree is false, and $f$ is a nonzero scalar.
This proves Lemma~\ref{lem:Zar-closure}. \hfill $\qed$

\subsection{Proof of Corollary \ref{cor:irredundant}}
  By Lemma~\ref{lem:support-M}, the support of $M_2$ is 
  $\cup_{i,j=1}^p S_{ij}$.  This has  Zariski closure $\cup_{i,j=1}^p \overline{S_{ij}}$.
  We claim its irredundant irreducible decomposition is
  \begin{equation} \label{eq:irredundantSI}
    \{ (\bx_1, \bx_2)  : \bx_1 = \bx_2 \} \,\, \cup  \,\, \bigcup_{i \neq j} \overline{S_{ij}}.
  \end{equation}

The claim follows from several facts.
First, for all $i \neq j$, $\overline{S_{ij}}$ is irreducible.  
 It has dimension $3$ and defining equation $q_{ij}$ (\ref{eq:qij-expandSI}), by \textbf{A1} and Lemma~\ref{lem:Zar-closure}.
When $i \neq j$, $i' \neq j'$ and
$(i,j) \neq (i',j')$, 
then $q_{ij}$ and $q_{i'j'}$ are not scalar multiples of each other by \textbf{A2}
(cf. their coefficients on $x_1^2, x_2^2, x_1^2y_1^2$).  
Hence $\overline{S_{ij}} \neq \overline{S_{i'j'}}$.
Next, $\overline{S_{ii}} = \{ (\bx_1, \bx_2) : \bx_1 = \bx_2 \}$ (all $i$).  Further, $\{ (\bx_1, \bx_2) : \bx_1 = \bx_2 \} \nsubseteq \overline{S_{ij}}$ (all $i \neq j$), because when we substitute $x_1=x_2$ and $y_1 = y_2$ into \eqref{eq:qij-expandSI} we get a nonzero result as the constant term does not vanish.
All together, (\ref{eq:irredundantSI}) is  the claimed irredundant irreducible decomposition as wanted. \hfill $\qed$

\subsection{Proof of Lemma~\ref{lem:interpolate}}
This follows from $\mathcal{I}(\overline{S_{ij}}) = \langle q_{ij} \rangle$ (Lemma~\ref{lem:Zar-closure}), the expression (\ref{eq:qij-expandSI}) for $q_{ij}$, and the proof of \cite[Thm.~3]{chen2019numerical}. \hfill $\qed$ 

\subsection{Proof of Lemma~\ref{lem:M2Sij}}
From \eqref{eq:M2-pushforward}, we have
\begin{align*}
    M_2(S_{ij}) = \sum_{i',j'=1}^p w_{i'} w_{j'} \mu(\theta_{i'j'}^{-1}(S_{ij})).
\end{align*}
Here $\mu(\theta_{ij}^{-1}(S_{ij})) = \mu(\SO) = 1$ by definition of $S_{ij}$. 
On the other hand, for all $i', j' = 1, \ldots, p$ with $(i', j') \neq (i, j)$,  $S_{ij} \cap S_{i'j'}$ is a semialgebraic set with positive codimension in $S_{i'j'}$ by the fact that (\ref{eq:irredundantSI}) is an irredundant irreducible decomposition.  
Then $\theta_{i'j'}^{-1}(S_{ij}) = \theta_{i'j'}^{-1}(S_{ij} \cap S_{i'j'})$ is a semialgebraic set with positive codimension in $\SO$.  
Since $\mu$ is absolutely continuous, it implies $\mu(\theta_{i'j'}^{-1}(S_{ij})) = 0$.  
\eqref{eq:ww} follows. \hfill $\qed$

\section{Proof of Theorem~\ref{thm:unique-determine-gaussians}}

We show how to reduce the proof to an application of Theorem~\ref{thm:unique-determine}. We have
\begin{equation}
    R\cdot \Phi(\bx) = \sum_{i=1}^p w_ie^{-\frac{\|R\bx-\ba_i\|^2}{2\kappa^2}} = \sum_{i=1}^p w_ie^{-\frac{\|\bx-R^T\ba_i\|^2}{2\kappa^2}}.
\end{equation}
Writing $\bx = (x,y,z)$ gives the following expression for the projection images
\begin{equation}
\begin{split}
        I_R(x,y) &= \sum_{i=1}^p w_i e^{-\frac{\|(x,y)-\pi R^T\ba_i\|^2}{2\kappa^2}} \int_{-\infty}^\infty e^{-\frac{(z-\pi_z R^T\ba_i)}{2\kappa^2}} dz
        \\
        &= \sum_{i=1}^p \sqrt{2\pi}\kappa w_i e^{-\frac{\|(x,y)-\pi R^T\ba_i\|^2}{2\kappa^2}},
        \end{split}
\end{equation}
where $\pi_z(a_1,a_2,a_3) := a_3$ is the projection operator onto the last coordinate. The second moment $M_2^G$ can then be written as
\begin{equation}
\begin{split}
    &M_2^G((x_1,y_1),(x_2,y_2)) \\
    &= \!\! \sum_{i,j=1}^p 2\pi\kappa^2 w_iw_j \int_{\SO} \!\!\!\!\!\!\!\!\!\! e^{-\frac{\| (x_1,y_1) - \pi R^T \ba_i \|^2 +\| (x_2,y_2) - \pi R^T \ba_j \|^2 }{2\kappa^2}} d\mu(R)\\
    &= \!\! \sum_{i,j=1}^p 2\pi\kappa^2 w_iw_j \int_{\SO} \!\!\!\!\!\!\!\!\!\! e^{-\frac{\| (x_1,y_1) - \pi R \ba_i \|^2 +\| (x_2,y_2) - \pi R \ba_j \|^2 }{2\kappa^2}} d\mu(R)\\
    &= 2\pi \kappa^2 M_2 * (k \otimes k),
    \end{split}
\end{equation}
where the second equality used the fact that the Haar measure on $\SO$ is invariant to transpositions \cite[Theorem~4.36]{kirillov2008introduction}, and $M_2$ is the second moment in the model of Theorem~\ref{thm:unique-determine}. Since $k$ has non-vanishing Fourier-transform, this equation can be deconvolved to obtain $M_2$. By Theorem~\ref{thm:unique-determine}, $M_2$ determines the weights and atomic positions $(w_i,\ba_i)$ up to a joint rotation and reflection, which concludes the proof.

\hfill $\qed$

\section{Sample complexity: Proofs of Theorems~\ref{thm:modified} and Corollary~\ref{cor:sample}}
\subsection{Proof of Theorem~\ref{thm:modified}}

The proof is divided into several steps.

\medskip 

\noindent \textbf{{Step 0}:}
We state  a general fact about real analytic functions that we will use:
\emph{Let $H(\by, \bz) : \mathbb{R}^m \times \mathbb{R}^n \rightarrow \mathbb{R}$ be real analytic jointly in $(\by, \bz)$.  Let $\nu$ be a compactly supported and absolutely continuous measure on $\mathbb{R}^n$.  Then $\int H(\by, \bz) d\nu(\bz)$ is real-analytic in $\by$.}
This can be justified by appropriately differentiating under the integral sign, see \cite{591021}.

\medskip

\noindent \textbf{{Step 1}:} 
We introduce notation.  

First, define
\begin{equation} \label{eq:X-space}
X = ( B(0,r) \times [w_-, w_+] )^{\times p} \subseteq \mathbb{R}^{4p},
\end{equation}
where $B(0,r)$ is the $\ell^2$-ball of radius $r$ in $\mathbb{R}^3$ centered at $0$. 
 $X$ is the space of possible molecules.

Next, slightly modifying the notation in the main text, write
\begin{equation*}
    {M}_2^{[m]} : X \rightarrow  \mathbb{R}^{2^m \times 2^m } \! \otimes  \mathbb{R}^{2^m \times 2^m } 
\end{equation*}
for the map associating a molecule to its pixelated second moment (\eqref{eq:secondmoment-modified}) when $2^m \times 2^m$ pixels are used.  Explicitly,
\begin{align*}
& {M}_2^{[m]}(\left\{(\ba_i, w_i) \right\}) = \\
   & \Bigg\{ \int_{j_4 \tau}^{(j_4 +1) \tau} \! \int_{j_3 \tau}^{(j_3 +1) \tau} \! \int_{j_2 \tau}^{(j_2 +1) \tau} \! \int_{j_1 \tau}^{(j_1 +1) \tau} \! \int_{\SO}  \sum_{i,j=1}^p w_i w_j \delta_{\pi R \ba_i}(x_1, y_1) \delta_{\pi R \ba_j}(x_2, y_2) \\
   &\qquad \qquad \qquad \qquad \qquad \qquad \qquad \qquad \ast (k(x_1,y_1)k(x_2,y_2)) d\mu(R) dx_1 dy_1 dx_2 dy_2 
   \Bigg\},
 \end{align*}
for $j_1, j_2, j_3, j_4 \in \{-2^{m-1},   \ldots, 2^{m-1} - 1\}$.
Then, ${M}^{[m]}_2$ is a real analytic function by Step 0.  Indeed, the Gaussian kernel $k(x,y)$ is real analytic, so the above integrand is real analytic in all variables. 
(Also, integration over $\SO$ is replaced by integration against a compactly supported absolutely continuous measure on $\mathbb{R}^3$ if we parameterize $\SO$ with Euler angles.)

Thirdly, we put 
\begin{equation*}
    L^{[m]}: \mathbb{R}^{2^{m+1} \! \times 2^{m+1}} \! \otimes \mathbb{R}^{2^{m+1} \! \times 2^{m+1}}  \rightarrow \mathbb{R}^{2^{m} \! \times 2^{m}} \! \otimes \mathbb{R}^{2^{m} \! \times 2^{m}}  
\end{equation*}
for the obvious linear map which lowers the resolution of the second moment by a factor of two, i.e.   $L^{[m]}(t) = s $ where 
\begin{equation*}
s_{j_1, j_2, j_3, j_4} = \sum_{\gamma_4 \in \{0, 1\}} \sum_{\gamma_3 \in \{0, 1\}} \sum_{\gamma_2 \in \{0, 1\}} \sum_{\gamma_1 \in \{0, 1\}} t_{2j_1 + \gamma_1, 2j_2 + \gamma_2, 2j_3 + \gamma_3, 2j_4 + \gamma_4}
\end{equation*}
for $j_1, j_2, j_3, j_4 \in \{-2^{m-1}, \ldots, 2^{m-1}-1 \}$.  
For all $m$, it holds
\begin{equation} \label{eq:coarsen}
    {M}^{[m]}_2 = L^{[m]} \circ {M}_2^
    {[m+1]}.
\end{equation}

\medskip

\noindent \textbf{{Step 2}:}
We prove that a certain stabilization occurs as $m \rightarrow \infty$.

Consider $X^{\times 2} = (B(0,r) \times [w_{-}, w_{+}])^{\times p} \times (B(0,r) \times [w_{-}, w_{+}])^{\times p} \subseteq \mathbb{R}^{8p}$, where the variables on $\mathbb{R}^{8p}$ are $\{ (\ba_i, w_i)\}, \{ (\bb_i, v_i) \}$.
Regard $X^{\times 2}$ 
as a semianalytic set (i.e., a subset of Euclidean space locally defined by real analytic equations and inequalities).  
Let $\mathcal{O}(X^{\times 2})$ denote the ring of real analytic functions on $X$. 
Then $\mathcal{O}(X^{\times 2})$ is a Noetherian ring, because $X$ is compact \cite[Th\'eor\`eme I, 9]{frisch1967points}.  (Note that $X^{\times 2}$ automatically satisfies the Stein hypothesis in \emph{loc. cit.} since we are in the real case, see \cite{siu1969noetherianness}.). 

We define
\begin{equation*}
\begin{split}
    \mathcal{I}^{[m]} = \text{ ideal in } &\mathcal{O}(X^{\times 2}) \text{ generated by the } 2^{4m} \text{ coordinate functions of } \\
    &M_2^{[m]}(\{ (\ba_i, w_i) \}) - {M_2^{[m]}}(\{(\bb_i, v_i) \}).
    \end{split}
\end{equation*}
For all $m$, we have
\begin{equation*}
    \mathcal{I}^{[m+1]} \supseteq \mathcal{I}^{[m]} 
\end{equation*}
by \eqref{eq:coarsen}.
From Noetherianity, there exists $ m' = m'(p, r, w_+, w_-)$ such that 
\begin{equation*}
\mathcal{I}^{[m]} = \mathcal{I}^{[m']} \quad \,\,\, \forall  m \geq m'.
\end{equation*}
Thus the corresponding zero sets in $X^{\times 2}$ stabilize too:
\begin{equation*}
    \Big{\{} \big{(} \{ (\ba_i, w_i) \}, \{ (\bb_i, v_i) \}  \big{)}   : \,  {M}_2^{[m]}(\{ (\ba_i, w_i)\}) = {M}_2^{[m]}(\{ (\bb_i, v_i)\}) \Big{\}}  \subseteq  X^{\times 2}
\end{equation*}
is constant in $m$ if $m \geq m'$. Equivalently, for all  $\{ (\ba_i, w_i) \} \in X$ we have:
\begin{equation} \label{eq:fiber-constant}
   \text{the fiber } \, ({M}_2^{[m]})^{-1}\!\big{(} {M}_2^{[m]}(\{\ba_i, w_i \}) \big{)} \! \subseteq X \,\, \text{ is constant in } m \text{ if } m \geq m'.
\end{equation}

\medskip 
 
\noindent \textbf{{Step 3}:} We deduce an equality in which there is no pixelation. 

Specifically, fix $\{(\ba_i, w_i) \}, \{(\bb_i, v_i)\} \in X$
such that $\{ (\ba_i, w_i)\}$ satisfies \textbf{A1}-\textbf{A2} in the main text and 
\begin{equation} \label{eq:equal-assume}
{M}_2^{[m']}(\{ (\ba_i, w_i)\}) = {M}_2^{[m']}(\{(\bb_i, v_i)\})
\end{equation}
holds. We claim there is an equality between unpixelated (but still blurred) second moments:
\begin{equation} \label{eq:equal-blurred-second}
    M_2(\{(\ba_i, w_i)\})  \ast  (k \otimes k)   = \, M_2(\{(\bb_i, v_i)\})   \ast  (k \otimes k).
\end{equation}

To see this, note by Step 0 that both sides of \eqref{eq:equal-blurred-second} are real-valued real analytic functions on $\mathbb{R}^2 \times \mathbb{R}^2$.
From continuity, if they differ on $[-1,1]^2$
there must exist a product of sufficiently small pixels where their integrals differ, i.e. $m \geq m'$ and $j_1, j_2, j_3, j_4 \in \{-2^{m-1}, \ldots, 2^{m-1} - 1\}$ such that
\begin{equation} \label{eq:bad-pixel}
  M_2^{[m]}(\{(\ba_i, w_i)\})((j_1, j_2), (j_3, j_4)) \, \neq \, M_2^{[m]}(\{(\bb_i, v_i)\})((j_1, j_2), (j_3, j_4)).
  \end{equation}
  However, \eqref{eq:bad-pixel} contradicts \eqref{eq:equal-assume} and (\ref{eq:fiber-constant}).
  Thus, \eqref{eq:equal-blurred-second} holds on $[-1,1]^2 \times [-1,1]^2$.  By real analyticity, \eqref{eq:equal-blurred-second} then holds on all of $\mathbb{R}^2 \times \mathbb{R}^2$ as wanted.
  
 \medskip

 \noindent \textbf{{Step 4}:} We undo the Gaussian blurring.
 
Continue with \eqref{eq:equal-blurred-second}.
 Because $M_2(\cdot)$ is compactly supported, it identifies with a tempered distribution.   
The Fourier transform is thus applicable to \eqref{eq:equal-blurred-second}.  
By the convolution theorem \cite[Thm.~7.1.15]{hormander2015analysis}, it gives
 \begin{equation} \label{eq:fourier}
      \widehat{M_2}(\{(\ba_i, w_i)\})    \, \widehat{k \otimes k}   \, \, \, = \, \, \widehat{M_2}(\{(\bb_i, v_i)\}   \, \widehat{k \otimes k}.
 \end{equation}
Note that the Paley-Wiener theorem \cite[Thm.~7.1.14]{hormander2015analysis} implies $\widehat{M_2}(\cdot)$ is a function rather just a distribution (moreover it is extendable to an entire function).
Also, $\widehat{k \otimes k}$ is a Gaussian function.  
Therefore, \eqref{eq:fourier} can be regarded as an equality of functions rather than just  distributions.
Since $\widehat{k \otimes k} \neq 0$ everywhere, it implies
\begin{equation*}
    \widehat{M_2}(\{(\ba_i, w_i)\})    \, \, = \, \, \widehat{M_2}(\{(\bb_i, v_i)\}),
\end{equation*}
whence
\begin{equation}
    M_2(\{(\ba_i, w_i)\}) = M_2(\{(\bb_i, v_i)\}),
\end{equation}
using the fact that the Fourier transform is an automorphism on tempered distributions.

\medskip

\noindent \textbf{Step 5:} We use Theorem~\ref{thm:theoretical} to conclude.

The above steps have shown: there exists $m'=m'(p,r,w_+, w_-)$ such that if $m \geq m'$ then $\{(\ba_i, w_i)\}, \{(\bb_i, v_i)\} \in X$ and $M_2^{[m]}(\{ (\ba_i, w_i) \}) = M_2^{[m']}(\{(\bb_i, v_i)\})$ imply $M_2(\{(\ba_i, w_i)\}) = M_2(\{(\bb_i, v_i)\})$.  However by Theorem~\ref{thm:theoretical}, $M_2(\{(\ba_i, w_i)\}) = M_2(\{(\bb_i, v_i)\})$ implies $\{(\ba_i, w_i)\}$ and $\{(\bb_i, v_i)\}$ are equal up to a rotation/reflection in $\mathbb{R}^3$, provided $\{(\ba_i, w_i)\}$ satisfies \textbf{A1}-\textbf{A2}.  

The proof of Theorem~\ref{thm:modified} is complete.  \hfill $\qed$

\subsection{Proof of Corollary~\ref{cor:sample}}
This now follows immediately from \cite[Sec.~3]{bandeira2017estimation} or \cite[Sec.~2]{pereira2019information},
because by Theorem~\ref{thm:modified} the second moment ${M_2}^{[m]}(\{(\ba_i,w_i)\})$ uniquely determines the signal $\{(\ba_i,w_i)\}$ up to  the group action of $O(3)$, provided $\{(\ba_i, w_i)\}$ satisfies the Zariski-open conditions \textbf{A1}-\textbf{A2}. 
\hfill $\qed$

\section{Normalized Bessel functions and the Nyquist criterion}
The spherical Bessel basis defined in the main text uses the normalized spherical Bessel functions $j_{\ell s}(k)$ defined by
\begin{equation}
    j_{\ell s}(k) = \frac{1}{c\sqrt{\pi}|j_{\ell+1}(R_{\ell,s})|}j_\ell(R_{\ell,s}\frac{k}{c}),
\end{equation}
where $j_\ell$ is the $\ell$th spherical Bessel function of the first kind \cite[Eq.~10.2.1]{NIST:DLMF}, $c$ the bandlimit of the projection images and $R_{\ell,s}$ the $s$th positive solution to $j_\ell = 0$. The Nyquist criterion determines the maximally allowable value of the truncation parameter $S_\ell$ by defining $S_\ell$ as the largest integer $s$ satisfying,
\begin{equation}
    R_{\ell,s+1} \leq 2\pi c R,
\end{equation}
assuming the projection images are supported on a disk of radius $R$~\cite{bhamre2017anisotropic}. Our numerical experiments used $c=0.5$ and a radius $R$ of $32$ voxels.

\bibliographystyle{plain}
\bibliography{references}

\begin{thebibliography}{10}

\bibitem{abbe2018estimation}
Emmanuel Abbe, Joao~M Pereira, and Amit Singer.
\newblock Estimation in the group action channel.
\newblock In {\em 2018 IEEE International Symposium on Information Theory
  (ISIT)}, pages 561--565. IEEE, 2018.

\bibitem{andrews1998special}
Larry~C Andrews.
\newblock {\em Special Functions of Mathematics for Engineers}, volume~49.
\newblock Spie Press, 1998.

\bibitem{bai2015cryo}
Xiao-Chen Bai, Greg McMullan, and Sjors~HW Scheres.
\newblock {How cryo-{EM} is revolutionizing structural biology}.
\newblock {\em Trends in Biochemical Sciences}, 40(1):49--57, 2015.

\bibitem{bandeira2017estimation}
Afonso~S Bandeira, Ben Blum-Smith, Joe Kileel, Amelia Perry, Jonathan Weed, and
  Alexander~S Wein.
\newblock Estimation under group actions: recovering orbits from invariants.
\newblock {\em arXiv preprint arXiv:1712.10163}, 2017.

\bibitem{barnett2022geometry}
Alexander~H Barnett, Charles~L Epstein, Leslie Greengard, and Jeremy Magland.
\newblock {\em {Geometry of the Phase Retrieval Problem: Graveyard of
  Algorithms}}.
\newblock Cambridge University Press, 2022.

\bibitem{bendory2020single}
Tamir Bendory, Alberto Bartesaghi, and Amit Singer.
\newblock Single-particle cryo-electron microscopy: Mathematical theory,
  computational challenges, and opportunities.
\newblock {\em IEEE Signal Processing Magazine}, 37(2):58--76, 2020.

\bibitem{bendory2017fourier}
Tamir Bendory, Robert Beinert, and Yonina~C Eldar.
\newblock Fourier phase retrieval: Uniqueness and algorithms.
\newblock In {\em Compressed Sensing and its Applications}, pages 55--91.
  Springer, 2017.

\bibitem{bendory2018toward}
Tamir Bendory, Nicolas Boumal, William Leeb, Eitan Levin, and Amit Singer.
\newblock {Toward single particle reconstruction without particle picking:
  Breaking the detection limit}.
\newblock {\em arXiv preprint arXiv:1810.00226}, 2018.

\bibitem{bendory2020toward}
Tamir Bendory and Dan Edidin.
\newblock Toward a mathematical theory of the crystallographic phase retrieval
  problem.
\newblock {\em SIAM Journal on Mathematics of Data Science}, 2(3):809--839,
  2020.

\bibitem{bendory2022algebraic}
Tamir Bendory and Dan Edidin.
\newblock Algebraic theory of phase retrieval.
\newblock {\em arXiv preprint arXiv:2203.02774}, 2022.

\bibitem{bendory2022sample}
Tamir Bendory and Dan Edidin.
\newblock The sample complexity of sparse multi-reference alignment and
  single-particle cryo-electron microscopy.
\newblock {\em arXiv preprint arXiv:2210.15727}, 2022.

\bibitem{bendory2021multi}
Tamir Bendory, Ti-Yen Lan, Nicholas~F. Marshall, Iris Rukshin, and Amit Singer.
\newblock Multi-target detection with rotations.
\newblock {\em Inverse Problems and Imaging}, 17(2):362--380, 2023.

\bibitem{bendory2022sparse}
Tamir Bendory, Oscar Mickelin, and Amit Singer.
\newblock Sparse multi-reference alignment: Sample complexity and computational
  hardness.
\newblock In {\em ICASSP 2022-2022 IEEE International Conference on Acoustics,
  Speech and Signal Processing (ICASSP)}, pages 8977--8981. IEEE, 2022.

\bibitem{bhamre2015orthogonal}
Tejal Bhamre, Teng Zhang, and Amit Singer.
\newblock Orthogonal matrix retrieval in cryo-electron microscopy.
\newblock In {\em 2015 IEEE 12th International Symposium on Biomedical Imaging
  (ISBI)}, pages 1048--1052. IEEE, 2015.

\bibitem{bhamre2017anisotropic}
Tejal Bhamre, Teng Zhang, and Amit Singer.
\newblock Anisotropic twicing for single particle reconstruction using
  autocorrelation analysis.
\newblock {\em arXiv preprint arXiv:1704.07969}, 2017.

\bibitem{bochnak2013real}
Jacek Bochnak, Michel Coste, and Marie-Fran{\c{c}}oise Roy.
\newblock {\em Real Algebraic Geometry}, volume~36.
\newblock Springer Science \& Business Media, 2013.

\bibitem{cai2016optimal}
T~Tony Cai, Xiaodong Li, and Zongming Ma.
\newblock Optimal rates of convergence for noisy sparse phase retrieval via
  thresholded {W}irtinger flow.
\newblock {\em The Annals of Statistics}, 44(5):2221--2251, 2016.

\bibitem{callaway2020revolutionary}
Ewen Callaway.
\newblock {Revolutionary cryo-EM is taking over structural biology}.
\newblock {\em Nature}, 578(7794):201--202, 2020.

\bibitem{candes2006robust}
Emmanuel~J Cand{\`e}s, Justin Romberg, and Terence Tao.
\newblock Robust uncertainty principles: Exact signal reconstruction from
  highly incomplete frequency information.
\newblock {\em IEEE Transactions on Information Theory}, 52(2):489--509, 2006.

\bibitem{chen2019numerical}
Justin Chen and Joe Kileel.
\newblock Numerical implicitization.
\newblock {\em Journal of Software for Algebra and Geometry}, 9(1):55--63,
  2019.

\bibitem{chen2021deep}
Muyuan Chen and Steven~J Ludtke.
\newblock Deep learning-based mixed-dimensional {G}aussian mixture model for
  characterizing variability in cryo-{EM}.
\newblock {\em Nature Methods}, 18(8):930--936, 2021.

\bibitem{cheney1959proximity}
Ward Cheney and Allen~A Goldstein.
\newblock Proximity maps for convex sets.
\newblock {\em Proceedings of the American Mathematical Society},
  10(3):448--450, 1959.

\bibitem{chi2016guaranteed}
Yuejie Chi.
\newblock Guaranteed blind sparse spikes deconvolution via lifting and convex
  optimization.
\newblock {\em IEEE Journal of Selected Topics in Signal Processing},
  10(4):782--794, 2016.

\bibitem{daubechies1992ten}
Ingrid Daubechies.
\newblock {\em Ten lectures on wavelets}.
\newblock SIAM, 1992.

\bibitem{NIST:DLMF}
{{\it NIST Digital Library of Mathematical Functions}}, http://dlmf.nist.gov/,
  Release 1.1.5 of 2022-03-15.
\newblock F.~W.~J. Olver, A.~B. {Olde Daalhuis}, D.~W. Lozier, B.~I. Schneider,
  R.~F. Boisvert, C.~W. Clark, B.~R. Miller, B.~V. Saunders, H.~S. Cohl, and
  M.~A. McClain, eds.

\bibitem{donatelli2015iterative}
Jeffrey~J Donatelli, Peter~H Zwart, and James~A Sethian.
\newblock {Iterative phasing for fluctuation X-ray scattering}.
\newblock {\em Proceedings of the National Academy of Sciences},
  112(33):10286--10291, 2015.

\bibitem{donoho2006compressed}
David~L Donoho.
\newblock Compressed sensing.
\newblock {\em IEEE Transactions on Information Theory}, 52(4):1289--1306,
  2006.

\bibitem{eisenbud2013commutative}
David Eisenbud.
\newblock {\em Commutative Algebra: With a View Toward Algebraic Geometry},
  volume 150.
\newblock Springer Science \& Business Media, 2013.

\bibitem{elad2010sparse}
Michael Elad.
\newblock {\em Sparse and Redundant Representations: From Theory to
  Applications in Signal and Image Processing}, volume~2.
\newblock Springer, 2010.

\bibitem{eldar2012compressed}
Yonina~C Eldar and Gitta Kutyniok.
\newblock {\em Compressed sensing: theory and applications}.
\newblock Cambridge university press, 2012.

\bibitem{elser2003phase}
Veit Elser.
\newblock Phase retrieval by iterated projections.
\newblock {\em Journal of the Optical Society of America A}, 20(1):40--55,
  2003.

\bibitem{elser2017complexity}
Veit Elser.
\newblock The complexity of bit retrieval.
\newblock {\em IEEE Transactions on Information Theory}, 64(1):412--428, 2017.

\bibitem{elser2018benchmark}
Veit Elser, Ti-Yen Lan, and Tamir Bendory.
\newblock Benchmark problems for phase retrieval.
\newblock {\em SIAM Journal on Imaging Sciences}, 11(4):2429--2455, 2018.

\bibitem{elser2007searching}
Veit Elser, I~Rankenburg, and P~Thibault.
\newblock Searching with iterated maps.
\newblock {\em Proceedings of the National Academy of Sciences},
  104(2):418--423, 2007.

\bibitem{fan2021maximum}
Zhou Fan, Roy~R Lederman, Yi~Sun, Tianhao Wang, and Sheng Xu.
\newblock Maximum likelihood for high-noise group orbit estimation and
  single-particle cryo-{EM}.
\newblock {\em arXiv preprint arXiv:2107.01305}, 2021.

\bibitem{fienup1982phase}
James~R Fienup.
\newblock Phase retrieval algorithms: A comparison.
\newblock {\em Applied Optics}, 21(15):2758--2769, 1982.

\bibitem{591021}
Daniel Fischer.
\newblock Convolution of measurable function with analytic function.
\newblock Mathematics Stack Exchange, 2013.
\newblock URL:https://math.stackexchange.com/q/591021 (version: 2013-12-03).

\bibitem{frank2006three}
Joachim Frank.
\newblock {\em Three-Dimensional Electron Microscopy of Macromolecular
  Assemblies: Visualization of Biological Molecules in Their Native State}.
\newblock Oxford University Press, 2006.

\bibitem{frisch1967points}
Jacques Frisch.
\newblock Points de platitude d'un morphisme d'espaces analytiques complexes.
\newblock {\em Inventiones Mathematicae}, 4(2):118--138, 1967.

\bibitem{galli2014x}
Simona Galli.
\newblock {X-ray crystallography: One century of {N}obel Prizes}.
\newblock {\em Journal of Chemical Education}, 91(12):2009--2012, 2014.

\bibitem{ghosh2021multi}
Subhroshekhar Ghosh and Philippe Rigollet.
\newblock Sparse multi-reference alignment: Phase retrieval, uniform
  uncertainty principles and the beltway problem.
\newblock {\em Foundations of Computational Mathematics}, pages 1--48, 2022.

\bibitem{gilles2022molecular}
Marc~Aur{\`e}le Gilles and Amit Singer.
\newblock {A molecular prior distribution for Bayesian inference based on
  Wilson statistics}.
\newblock {\em Computer Methods and Programs in Biomedicine}, 221:106830, 2022.

\bibitem{goodfellow2016deep}
Ian Goodfellow, Yoshua Bengio, and Aaron Courville.
\newblock {\em Deep learning}.
\newblock MIT press, 2016.

\bibitem{M2}
Daniel~R. Grayson and Michael~E. Stillman.
\newblock {Macaulay2, a software system for research in algebraic geometry},
  Available at \url{http://www.math.uiuc.edu/Macaulay2/}.

\bibitem{greenberg2017common}
Ido Greenberg and Yoel Shkolnisky.
\newblock Common lines modeling for reference free ab-initio reconstruction in
  cryo-{EM}.
\newblock {\em Journal of Structural Biology}, 200(2):106--117, 2017.

\bibitem{herzik2019high}
Mark~A Herzik, Mengyu Wu, and Gabriel~C Lander.
\newblock {High-resolution structure determination of sub-100 kDa complexes
  using conventional cryo-EM}.
\newblock {\em Nature Communications}, 10(1):1--9, 2019.

\bibitem{hormander2015analysis}
Lars H{\"o}rmander.
\newblock {\em The Analysis of Linear Partial Differential Operators I:
  Distribution Theory and Fourier Analysis}.
\newblock Springer, 2015.

\bibitem{huang2022orthogonal}
Shuai Huang, Mona Zehni, Ivan Dokmani{\'c}, and Zhizhen Zhao.
\newblock Orthogonal matrix retrieval with spatial consensus for 3d
  unknown-view tomography.
\newblock {\em arXiv preprint arXiv:2207.02985}, 2022.

\bibitem{jaganathan2017sparse}
Kishore Jaganathan, Samet Oymak, and Babak Hassibi.
\newblock {Sparse phase retrieval: Uniqueness guarantees and recovery
  algorithms}.
\newblock {\em IEEE Transactions on Signal Processing}, 65(9):2402--2410, 2017.

\bibitem{jonic2015coarse}
Slavica Joni{\'c} and Carlos {\'O}scar~S{\'a}nchez Sorzano.
\newblock Coarse-graining of volumes for modeling of structure and dynamics in
  electron microscopy: Algorithm to automatically control accuracy of
  approximation.
\newblock {\em IEEE Journal of Selected Topics in Signal Processing},
  10(1):161--173, 2015.

\bibitem{joubert2015bayesian}
Paul Joubert and Michael Habeck.
\newblock Bayesian inference of initial models in cryo-electron microscopy
  using pseudo-atoms.
\newblock {\em Biophysical Journal}, 108(5):1165--1175, 2015.

\bibitem{kam1980reconstruction}
Zvi Kam.
\newblock The reconstruction of structure from electron micrographs of randomly
  oriented particles.
\newblock {\em Journal of Theoretical Biology}, 82(1):15--39, 1980.

\bibitem{katsevich2020likelihood}
Anya Katsevich and Afonso~S Bandeira.
\newblock Likelihood maximization and moment matching in low {SNR} {Gaussian}
  mixture models.
\newblock {\em Communications on Pure and Applied Mathematics}, 2020.

\bibitem{kawabata2018gaussian}
Takeshi Kawabata.
\newblock Gaussian-input {G}aussian mixture model for representing density maps
  and atomic models.
\newblock {\em Journal of Structural Biology}, 203(1):1--16, 2018.

\bibitem{kimanius2021exploiting}
Dari Kimanius, Gustav Zickert, Takanori Nakane, Jonas Adler, Sebastian Lunz,
  C-B Sch{\"o}nlieb, Ozan {\"O}ktem, and Sjors~HW Scheres.
\newblock {Exploiting prior knowledge about biological macromolecules in
  cryo-EM structure determination}.
\newblock {\em IUCrJ}, 8(1):60--75, 2021.

\bibitem{kirillov2008introduction}
Alexander~A Kirillov.
\newblock {\em An introduction to Lie groups and Lie algebras}, volume 113.
\newblock Cambridge University Press, 2008.

\bibitem{kirkwood2022multi}
Henry~J Kirkwood, Raphael de~Wijn, Grant Mills, Romain Letrun, Marco Kloos,
  Mohammad Vakili, Mikhail Karnevskiy, Karim Ahmed, Richard~J Bean, Johan
  Bielecki, et~al.
\newblock {A multi-million image Serial Femtosecond Crystallography dataset
  collected at the European XFEL}.
\newblock {\em Scientific Data}, 9(1):1--7, 2022.

\bibitem{kuhlbrandt2014resolution}
Werner K{\"u}hlbrandt.
\newblock The resolution revolution.
\newblock {\em Science}, 343(6178):1443--1444, 2014.

\bibitem{kurta2013solution}
RP~Kurta, R~Dronyak, Massimo Altarelli, Edgar Weckert, and IA~Vartanyants.
\newblock Solution of the phase problem for coherent scattering from a
  disordered system of identical particles.
\newblock {\em New journal of physics}, 15(1):013059, 2013.

\bibitem{kurta2017correlations}
Ruslan~P Kurta, Jeffrey~J Donatelli, Chun~Hong Yoon, Peter Berntsen, Johan
  Bielecki, Benedikt~J Daurer, Hasan DeMirci, Petra Fromme, Max~Felix Hantke,
  Filipe~RNC Maia, et~al.
\newblock Correlations in scattered {X}-ray laser pulses reveal nanoscale
  structural features of viruses.
\newblock {\em Physical review letters}, 119(15):158102, 2017.

\bibitem{lan2020multi}
Ti-Yen Lan, Tamir Bendory, Nicolas Boumal, and Amit Singer.
\newblock Multi-target detection with an arbitrary spacing distribution.
\newblock {\em IEEE Transactions on Signal Processing}, 68:1589--1601, 2020.

\bibitem{lawson2016emdatabank}
Catherine~L Lawson, Ardan Patwardhan, Matthew~L Baker, Corey Hryc, Eduardo~Sanz
  Garcia, Brian~P Hudson, Ingvar Lagerstedt, Steven~J Ludtke, Grigore Pintilie,
  Raul Sala, et~al.
\newblock {{EMDataBank} unified data resource for {3DEM}}.
\newblock {\em Nucleic Acids Research}, 44(D1):D396--D403, 2016.

\bibitem{levin20183d}
Eitan Levin, Tamir Bendory, Nicolas Boumal, Joe Kileel, and Amit Singer.
\newblock {3D ab initio modeling in cryo-EM by autocorrelation analysis}.
\newblock In {\em 2018 IEEE 15th International Symposium on Biomedical Imaging
  (ISBI 2018)}, pages 1569--1573. IEEE, 2018.

\bibitem{luke2004relaxed}
D~Russell Luke.
\newblock Relaxed averaged alternating reflections for diffraction imaging.
\newblock {\em Inverse problems}, 21(1):37, 2004.

\bibitem{maker2022regulation}
Allison Maker, Madison Bolejack, Leslayann Schecterson, Brad Hammerson, Jan
  Abendroth, Tom~E Edwards, Bart Staker, Peter~J Myler, and Barry~M Gumbiner.
\newblock Regulation of multiple dimeric states of {E}-cadherin by adhesion
  activating antibodies revealed through cryo-em and {X}-ray crystallography.
\newblock {\em bioRxiv}, 2022.

\bibitem{mallat1999wavelet}
St{\'e}phane Mallat.
\newblock {\em A wavelet tour of signal processing}.
\newblock Elsevier, 1999.

\bibitem{mallat1989theory}
Stephane~G Mallat.
\newblock A theory for multiresolution signal decomposition: the wavelet
  representation.
\newblock {\em IEEE transactions on pattern analysis and machine intelligence},
  11(7):674--693, 1989.

\bibitem{marshall2022fast}
Nicholas~F. Marshall, Oscar Mickelin, Yunpeng Shi, and Amit Singer.
\newblock {Fast Principal Component Analysis for Cryo-EM Images}.
\newblock {\em Biological Imaging}, page 1–16, 2023.

\bibitem{martinsson_tropp_2020}
Per-Gunnar Martinsson and Joel~A. Tropp.
\newblock Randomized numerical linear algebra: Foundations and algorithms.
\newblock {\em Acta Numerica}, 29:403–572, 2020.

\bibitem{oseledets2006unifying}
Ivan Oseledets and Eugene Tyrtyshnikov.
\newblock A unifying approach to the construction of circulant preconditioners.
\newblock {\em Linear Algebra and its Applications}, 418(2-3):435--449, 2006.

\bibitem{panaretos2009random}
Victor~M. Panaretos.
\newblock On random tomography with unobservable projection angles.
\newblock {\em The Annals of Statistics}, 37(6A):3272--3306, 2009.

\bibitem{panaretos2011sparse}
Victor~M Panaretos and Kjell Konis.
\newblock Sparse approximations of protein structure from noisy random
  projections.
\newblock {\em The Annals of applied statistics}, pages 2572--2602, 2011.

\bibitem{pereira2019information}
Jo{\~a}o~M Pereira.
\newblock {\em Information theoretic aspects of cryo-electron microscopy}.
\newblock PhD thesis, Princeton University, 2019.

\bibitem{perry2019sample}
Amelia Perry, Jonathan Weed, Afonso~S Bandeira, Philippe Rigollet, and Amit
  Singer.
\newblock The sample complexity of multireference alignment.
\newblock {\em SIAM Journal on Mathematics of Data Science}, 1(3):497--517,
  2019.

\bibitem{pettersen2004ucsf}
Eric~F Pettersen, Thomas~D Goddard, Conrad~C Huang, Gregory~S Couch, Daniel~M
  Greenblatt, Elaine~C Meng, and Thomas~E Ferrin.
\newblock {UCSF Chimera—a visualization system for exploratory research and
  analysis}.
\newblock {\em Journal of Computational Chemistry}, 25(13):1605--1612, 2004.

\bibitem{punjani2017cryosparc}
Ali Punjani, John~L Rubinstein, David~J Fleet, and Marcus~A Brubaker.
\newblock {cryoSPARC: Algorithms for rapid unsupervised cryo-EM structure
  determination}.
\newblock {\em Nature Methods}, 14(3):290--296, 2017.

\bibitem{rosenbaum2021inferring}
Dan Rosenbaum, Marta Garnelo, Michal Zielinski, Charlie Beattie, Ellen Clancy,
  Andrea Huber, Pushmeet Kohli, Andrew~W Senior, John Jumper, Carl Doersch,
  et~al.
\newblock Inferring a continuous distribution of atom coordinates from
  cryo-{EM} images using {VAE}s.
\newblock {\em arXiv preprint arXiv:2106.14108}, 2021.

\bibitem{saldin2011reconstructing}
DK~Saldin, H-C Poon, Peter Schwander, Miraj Uddin, and Marius Schmidt.
\newblock Reconstructing an icosahedral virus from single-particle diffraction
  experiments.
\newblock {\em Optics express}, 19(18):17318--17335, 2011.

\bibitem{saunderson2012diagonal}
James Saunderson, Venkat Chandrasekaran, Pablo~A Parrilo, and Alan~S Willsky.
\newblock Diagonal and low-rank matrix decompositions, correlation matrices,
  and ellipsoid fitting.
\newblock {\em SIAM Journal on Matrix Analysis and Applications},
  33(4):1395--1416, 2012.

\bibitem{saunderson2013diagonal}
James Saunderson, Pablo~A Parrilo, and Alan~S Willsky.
\newblock Diagonal and low-rank decompositions and fitting ellipsoids to random
  points.
\newblock In {\em 52nd IEEE Conference on Decision and Control}, pages
  6031--6036. IEEE, 2013.

\bibitem{scheres2012relion}
Sjors~HW Scheres.
\newblock {RELION: implementation of a Bayesian approach to cryo-EM structure
  determination}.
\newblock {\em Journal of Structural Biology}, 180(3):519--530, 2012.

\bibitem{schonemann1966generalized}
Peter~H Sch{\"o}nemann.
\newblock A generalized solution of the orthogonal procrustes problem.
\newblock {\em Psychometrika}, 31(1):1--10, 1966.

\bibitem{sharon2020method}
Nir Sharon, Joe Kileel, Yuehaw Khoo, Boris Landa, and Amit Singer.
\newblock {Method of moments for 3D single particle ab initio modeling with
  non-uniform distribution of viewing angles}.
\newblock {\em Inverse Problems}, 36(4):044003, 2020.

\bibitem{shechtman2015phase}
Yoav Shechtman, Yonina~C Eldar, Oren Cohen, Henry~Nicholas Chapman, Jianwei
  Miao, and Mordechai Segev.
\newblock Phase retrieval with application to optical imaging: A contemporary
  overview.
\newblock {\em IEEE Signal Processing Magazine}, 32(3):87--109, 2015.

\bibitem{shepp1974fourier}
Lawrence~A Shepp and Benjamin~F Logan.
\newblock {The Fourier reconstruction of a head section}.
\newblock {\em IEEE Transactions on Nuclear Science}, 21(3):21--43, 1974.

\bibitem{sigworth1998maximum}
Fred~J Sigworth.
\newblock A maximum-likelihood approach to single-particle image refinement.
\newblock {\em Journal of Structural Biology}, 122(3):328--339, 1998.

\bibitem{singer2018mathematics}
Amit Singer.
\newblock Mathematics for cryo-electron microscopy.
\newblock In {\em Proceedings of the International Congress of Mathematicians:
  Rio de Janeiro 2018}, pages 3995--4014. World Scientific, 2018.

\bibitem{singer2021wilson}
Amit Singer.
\newblock Wilson statistics: derivation, generalization and applications to
  electron cryomicroscopy.
\newblock {\em Acta Crystallographica Section A: Foundations and Advances},
  77(5), 2021.

\bibitem{singer2011three}
Amit Singer and Yoel Shkolnisky.
\newblock Three-dimensional structure determination from common lines in
  cryo-{EM} by eigenvectors and semidefinite programming.
\newblock {\em SIAM Journal on Imaging Sciences}, 4(2):543--572, 2011.

\bibitem{siu1969noetherianness}
Yum-Tong Siu.
\newblock Noetherianness of rings of holomorphic functions on {S}tein compact
  subsets.
\newblock {\em Proceedings of the American Mathematical Society},
  21(2):483--489, 1969.

\bibitem{spence2017xfels}
JCH Spence.
\newblock {XFELs for structure and dynamics in biology}.
\newblock {\em IUCrJ}, 4(4):322--339, 2017.

\bibitem{tibshirani1996regression}
Robert Tibshirani.
\newblock Regression shrinkage and selection via the lasso.
\newblock {\em Journal of the Royal Statistical Society: Series B
  (Methodological)}, 58(1):267--288, 1996.

\bibitem{vinothkumar2016single}
Kutti~R Vinothkumar and Richard Henderson.
\newblock Single particle electron cryomicroscopy: trends, issues and future
  perspective.
\newblock {\em Quarterly reviews of biophysics}, 49:e13, 2016.

\bibitem{vonesch2011fast}
C{\'e}dric Vonesch, Lanhui Wang, Yoel Shkolnisky, and Amit Singer.
\newblock Fast wavelet-based single-particle reconstruction in cryo-{EM}.
\newblock In {\em 2011 IEEE International Symposium on Biomedical Imaging: From
  Nano to Macro}, pages 1950--1953. IEEE, 2011.

\bibitem{zehni20203d}
Mona Zehni, Shuai Huang, Ivan Dokmani{\'c}, and Zhizhen Zhao.
\newblock {3D} unknown view tomography via rotation invariants.
\newblock In {\em ICASSP 2020-2020 IEEE International Conference on Acoustics,
  Speech and Signal Processing (ICASSP)}, pages 1449--1453. IEEE, 2020.

\bibitem{zhang2019structured}
Yuqian Zhang, Han-Wen Kuo, and John Wright.
\newblock Structured local optima in sparse blind deconvolution.
\newblock {\em IEEE Transactions on Information Theory}, 66(1):419--452, 2019.

\bibitem{zhong2021exploring}
Ellen~D Zhong, Adam Lerer, Joseph~H Davis, and Bonnie Berger.
\newblock {Exploring generative atomic models in cryo-EM reconstruction}.
\newblock {\em arXiv preprint arXiv:2107.01331}, 2021.

\end{thebibliography}

\end{document}